%% file: structure-of-elasticity.tex
\newif\ifArxiv\Arxivtrue
\ifArxiv
\documentclass[a4paper,leqno]{amsart}
\usepackage[utf8]{inputenc}
\usepackage{times,euler}
\usepackage{amssymb}
\usepackage{a4wide}
\usepackage{hyperref}
\let\Cite\cite
\let\Cites\cite
\let\cfcite\cite
\else
\fi
\usepackage[T1]{fontenc}
\usepackage[capitalize]{cleveref}
\usepackage{enumitem}\setlist[enumerate]{label=(\roman*)}
\usepackage{amssymb}
\usepackage{tikz-cd}
\usepackage{booktabs}
\usepackage{ao-math-symbols}
\usepackage{ao-math-fields}
\usepackage{ao-math-opt}
\usepackage{luh-colors}

\ifArxiv
\input{amsorcid.add}
\theoremstyle{plain}
\newtheorem{lemma}{Lemma}[section]
\newtheorem{theorem}[lemma]{Theorem}
\newtheorem{proposition}[lemma]{Proposition}
\theoremstyle{definition}
\newtheorem{definition}[lemma]{Definition}
\newtheorem{problem}[lemma]{Problem}
\theoremstyle{remark}
\newtheorem{remark}[lemma]{Remark}
\numberwithin{equation}{section}
\else
\fi

\let\bd\boundary
\let\pd\partial
\let\dfn\define
\newcommand\bdD{\Gamma_D}
\newcommand\bdN{\Gamma_N}
\newcommand\extD{\mathcal F}
\newcommand\extN{\mathcal L}
\newcommand\Rd{\R^d}
\newcommand\Md{\mathbb M^d}
\newcommand\Sd{\mathbb S^d}
\newcommand\Ad{\mathbb A^d}
\newcommand\setD{\mathcal D}
\newcommand\data{\mathfrak D}
\newcommand\ed{\~e}
\newcommand\sd{\~s}
\newcommand\on{\text{ on }}
\newcommand\qfor{\quad\forall}
\newcommand\tfor{\ \forall}
\newcommand\cg{\mathfrak g}
\newcommand\cG{\mathfrak G}
\newcommand\lM{\ell_s}
\newcommand\lN{\ell_e}
\newcommand\dy{\,\textup dy}
\newcommand\id[1][]{\textup{id}_{#1}}
\newcommand\bprod{\sprod[\Gamma]}
\newcommand\iprod[3][]{(#2, #3)_{#1}}
\newcommand\cl[2][1.5mu]{\mkern#1
  \overline{\mkern-#1 #2\mkern-#1}\mkern#1}
\newcommand\LetO[1][]{Let $\Omega \subset \Rd$ be a bounded Lipschitz domain#1. }
\newcommand\LetOG{\LetO[, $\Gamma = \bd\Omega$]}
\newcommand\LetODN{Let $\Omega, \bdD, \bdN$ be as introduced above. }

\newcommand\Ltwo{L^2(\Omega)}
\newcommand\LtwoM{L^2(\Omega, \Md)}
\newcommand\LtwoR{L^2(\Omega, \Rd)}
\newcommand\LtwoS{L^2(\Omega, \Sd)}
\newcommand\Hone[1][\Rd]{H^1(\Omega, #1)}
\newcommand\HoneD{H^1_D(\Omega, \Rd)}
\newcommand\Honez{H^1_0(\Omega, \Rd)}
\newcommand\Hdiv[1][\Sd]{H^{\div}(\Omega, #1)}
\newcommand\HdivN{H^{\div}_N(\Omega, \Sd)}
\newcommand\Hdivz[1][\Sd]{H^{\div}_0(\Omega, #1)}
\newcommand\Hsym[1][]{H_{#1}\tsp{sym}(\Omega, \Rd)}
\newcommand\Hphalf[1][]{H^{1/2}(\Gamma_{#1}, \Rd)}
\newcommand\Hnhalf[1][]{H^{-1/2}_s(\Gamma_{#1}, \Rd)}
\newcommand\Hnhalff{H^{-1/2}(\Gamma, \Rd)}
\newcommand\test[1][\Rd]{\mathscr D(\Omega, #1)}
\newcommand\dist{\mathscr D'(\Omega, \Rd)}
\newcommand\obj{\varphi}
\newcommand\BPD[1]{BPD$_{#1}$}
\newcommand\BPM[1]{BPM$_{#1}$}
\newcommand\BPN[1]{BPN$_{#1}$}
\newcommand\XD[1]{\XM{D#1}}
\newcommand\XM[1]{X_{#1}}
\newcommand\XN[1]{\XM{N#1}}

\let\div\relax
\DeclareMathOperator*{\argmin}{argmin}
\DeclareMathOperator{\div}{div}
\DeclareMathOperator{\grad}{grad}
\DeclareMathOperator{\Grad}{grad^s}
\DeclareMathOperator{\graph}{graph}
\DeclareMathOperator{\im}{im}
\DeclareMathOperator{\dom}{dom}
\DeclareMathOperator{\Span}{span}
\DeclareMathOperator{\tr}{tr}

\newcommand\tiText{Intrinsic Structure of Elasticity in Hilbert Space}
\newcommand\abText{%
  Mathematical elasticity has a long history and a huge body of literature.
  Surprisingly, at its heart elasticity exhibits
  a rich and transparent structure that, to our knowledge,
  is rarely presented in an explicit and unified form.
  Using standard tools from functional analysis,
  this article reveals four orthogonal decompositions of Hilbert spaces
  that characterize the static equilibrium
  of an elastic body at small deformations,
  universally for arbitrary spatial dimension, arbitrary boundary conditions,
  and classical as well as data-driven formulations.
  Regarding data-driven continuum mechanics,
  it highlights the intrinsic structure that remains unchanged
  when constitutive laws are replaced by material data sets.
}
\newcommand\kwText{%
  Elasticity\ksep
  PDE boundary value problems\ksep
  data-driven formulations\ksep
  structural analysis\ksep
  duality\ksep
  orthogonal decompositions%
}
\newcommand\clText{%
  35Q74, 
  35J57, 
  47N50, 
  49N10, 
  49N15
}

\newcommand\UiB{University of Bergen,
  Geophysical Institute and Bergen Offshore Wind Centre (BOW)\asep
  Allégaten~70, 5007 Bergen, Norway}
\newcommand\LUH{Leibniz University Hannover,
  Institute of Applied Mathematics\asep
  Welfengarten~1, 30167 Hannover, Germany}

\ifArxiv
\newcommand\asep{, }
\newcommand\ksep{, }
\title{\tiText}
\author[C. G. Gebhardt]{Cristian G. Gebhardt}
\address{Cristian G. Gebhardt\\\UiB}
\orcid{0000-0003-0942-5526}
\email{cristian.gebhardt@uib.no}
\urladdr{uib.no/en/persons/Cristian.Guillermo.Gebhardt}

\author[J. Lankeit]{Johannes Lankeit}
\address{Johannes Lankeit\\\LUH}
\orcid{0000-0002-2563-7759}
\email{lankeit@ifam.uni-hannover.de}
\urladdr{ifam.uni-hannover.de/lankeit}

\author[M. C. Steinbach]{Marc C. Steinbach}
\address{Marc C. Steinbach\\\LUH}
\orcid{0000-0002-6343-9809}
\email{mcs@ifam.uni-hannover.de}
\urladdr{ifam.uni-hannover.de/steinbach}

\begin{document}

\begin{abstract}
  \abText
\end{abstract}

\keywords{\kwText}

\subjclass[2000]{\clText}

\date{\today}

\maketitle
\else
\fi

\section{Introduction}

This work is motivated by static equilibrium problems
in data-driven elasticity at small deformations,
which generalize PDE boundary value problems
that involve the symmetric gradient operator for vector fields
and the divergence operator for symmetric tensor fields
\cite{Ciarlet:1988,MarsdenHughes1994}.
Many other classical problems in physics and engineering---%
such as Darcy flow \cite{Bear1972},
steady‑state heat conduction \cite{CarslawJaeger1959},
diffusion \cite{Crank1975},
electrostatics and magnetostatics \cite{Jackson1998}---%
share a common mathematical foundation:
they are all governed by linear, second-order elliptic PDEs in divergence form
\cite{GilbargTrudinger2001,Evans2010}.
Each model consists of a constitutive law
that links a flux to a gradient of a potential,
and a balance law expressed through a divergence.
In scalar problems (Darcy, heat conduction, diffusion, electrostatics),
the flux takes the form $q = -A \grad u$,
where $A$ is a symmetric positive-definite tensor,
and the governing equation becomes
\begin{equation*}
  \div(A \grad u) = f,
\end{equation*}
where $f$ is a source term.
In elasticity at small deformations, the structure is similar,
but \ifArxiv the gradient \else $\grad u$ \fi
is replaced by the symmetric gradient
$\Grad u = \frac12 (\grad u + \grad u\tp)$,
reflecting that stress depends only on deformation,
not rigid rotations or translations.
The constitutive relation $s = C : \Grad u$
and the equilibrium condition $\div s + f = 0$
together yield the vector‑valued elliptic system
\begin{equation*}
  \div (C : \Grad u) = -f.
\end{equation*}
Thus, despite differences in physical meaning and interpretation,
all these models are built on the same duality:
a gradient‑type operator that creates a generalized flux
(heat flux, Darcy velocity, electric displacement, elastic stress),
and a divergence operator that enforces conservation
(of mass, energy, charge, or momentum).
This shared operator structure explains
why these problems have similar analytical properties,
variational formulations, and numerical discretizations,
and why techniques developed in one field transfer naturally to another.
The mathematical structure also extends naturally to data‑driven elasticity,
where the constitutive law is no longer given analytically
but represented through a data set of admissible strain-stress pairs.
Even in this setting, the PDE governing the displacement field
retains its fundamental form:
the symmetric gradient maps displacements to strains,
and the divergence maps stresses to forces,
preserving the duality between kinematics and equilibrium.
However, much of the required mathematical theory
has historically been developed within specific application domains,
primarily in physics and engineering.
Although most relevant aspects exist somewhere in the extensive literature,
they are often difficult to locate, may lack mathematical rigor,
or require substantial generalization.

Our article addresses these issues
for the problem class of interest
by providing a concise and rigorous theoretical framework
that covers arbitrary boundary conditions (Dirichlet, mixed, Neumann)
in any spatial dimension,
that generates insight into the inherent common structure
and into subtle differences of the three cases
along with their physical interpretations,
and that is presented on a basic technical level
using few standard concepts.
In specific, we consider the divergence and gradient
as unbounded operators on Hilbert spaces
and as bounded operators on their domains,
we use trace operators and associated extension and lift maps,
and we need infinitesimal rigid body motions
and the two key inequalities of Poincaré and Korn.
To uncover the intrinsic problem structure,
we construct four orthogonal decompositions
of Hilbert spaces by means of adjoint operators and pseudoinverses.
Moreover, we introduce data-driven problem formulations
that add an optimization context with the associated duality.
Analyzing these problems provides further structural insight
and leads to a unified formulation
that covers all problems that we address.

Since even a moderately complete literature review
is entirely out of reach
(and probably more distracting than helpful),
we only cite a few standard works,
selected rigorous sources for specific results,
and selected references for further reading.
Just recently we came across the closely related article
\Cite{Gudoshnikow_Krizk:2025} wherein adjoint duality
is used to analyze inhomogeneous linear elasticity
under Dirichlet or mixed boundary conditions
and where the technical details from functional analysis
are treated extensively, with numerous rigorous references.
However, the coupling of PDE and boundary conditions in defining
divergence and gradient as unbounded adjoints
obscures certain symmetries and other structural aspects.
Details will be discussed in the summary.

The remainder of this article is structured as follows.
In \cref{sec:preliminaries} we introduce basic notation and concepts,
and we derive fundamental results for our setting,
in particular the four orthogonal decompositions of Hilbert spaces.
\Cref{sec:dd-problems} presents the data-driven paradigm and analyzes
successively more concise formulations of the static elasticity problem.
Using duality theory from optimization,
we finally arrive at the unified problem formulation:
an $L^2$ minimization problem for projected auxiliary strains and stresses
with values in the material data set.
The results are illustrated in \cref{sec:1d}, where spaces, operators,
and certain solutions are explicitly given for spatial dimension one.
We conclude with a summary in \cref{sec:summary}.

\section{Preliminaries}
\label{sec:preliminaries}

We denote the space of square matrices by $\Md \dfn \R^{d \times d}$
and the subspaces of antisymmetric (skew-symmetric)
and symmetric matrices by $\Ad \dfn \smash[b]{\R^{d \times d}\tsb{skew}}$
and $\Sd \dfn \smash[b]{\R^{d \times d}\tsb{sym}}$, respectively.
It holds $\Md = \Ad \oplus \Sd$ where
$\Ad \perp \Sd$ with respect to the Frobenius scalar product,
$\iprod[\Md]{A}{B} \dfn \tr(A\tp B)$.
We address problems described by vector fields in $\Rd$
and symmetric tensor fields in $\Sd$.

Throughout, let $\Omega \subset \Rd$ be a bounded domain
(i.e., an open and connected set)
with Lipschitz boundary $\Gamma \dfn \bd\Omega$,
and let $\Gamma$ be partitioned into
a Dirichlet boundary $\bdD$ and a Neumann boundary $\bdN$,
both open subsets relative to~$\Gamma$,
\begin{align*}
  \Gamma &= \cl\Gamma_D \cup \cl\Gamma_N,
  & \bdD \cap \bdN &= \0.
\end{align*}
Subsequently, we will work with \ifArxiv the \fi Hilbert spaces
$\LtwoR$, $\LtwoS \subset \LtwoM$, $\Hone$ and
$\Hdiv \subset \Hdiv[\Md] = \defset{s \in \LtwoM}{\div s \in \LtwoR}$,
each equipped with the standard scalar product. In particular, we have
\begin{gather*}
  \iprod[\Hone]{u}{v} =
  \iprod[\LtwoR]{u}{v} + \iprod[\LtwoM]{\grad u}{\grad v}, \\
  \iprod[\Hdiv]{s}{t} \equiv \iprod[{\Hdiv[\Md]}]{s}{t} =
  \iprod[\LtwoM]{s}{t} + \iprod[\LtwoR]{\div s}{\div t},
\end{gather*}
where\ifArxiv in\fi\ the $L^2$ integrands
are finite-dimensional scalar products
such as $\iprod[\Rd]{u(x)}{v(x)}$ and $\iprod[\Md]{s(x)}{t(x)}$.
As usual, the operators $\div$, $\grad$ and $\Grad$ are defined row-wise:
$(\div s)_i = \sum_j \pd_j s_{ij}$,
$(\grad u)_{ij} = \pd_j u_i$ and
$(\Grad u)_{ij} = \frac12 (\pd_i u_j + \pd_j u_i)$.
To handle the boundary conditions, we need
$\Honez$, the closure of $\test$ in $\Hone$, and
$\Hdivz[\Md]$, the closure of $\test[\Md]$ in $\Hdiv[\Md]$,
where $\test[X] = C_c^\oo(\Omega, X)$ is the space of
$X$-valued test functions. Moreover,
we will rely heavily on the following standard trace operators,
and later on associated extension and lifting operators.

\begin{lemma}[Trace maps]
  \label{lem:traces}
  \LetOG
  Then:
  \begin{enumerate}
  \item There \ifArxiv exists \else is \fi a surjective bounded linear map
    $T_0\: \Hone \to \Hphalf$ such that
    $T_0 u = u|_\Gamma\ifArxiv$ for every
    $\else \tfor\fi u \in C^1(\cl\Omega, \Rd)$.
  \item There is a surjective bounded linear map
    $T_\nu\: \Hdiv[\Md] \to \Hnhalff$ such that
    $T_\nu s = s \nu|_\Gamma \ifArxiv$ for every
    $\else \tfor\fi s \in C^1(\cl\Omega, \Md)$,
    where $\nu$ is the outer unit normal.
  \item It holds $\ker T_0 = \Honez$ and $\ker T_\nu = \Hdivz[\Md]$.
  \end{enumerate}
\end{lemma}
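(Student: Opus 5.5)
These are standard results, and I would prove them by reduction to the scalar case, citing the classical theory (Gagliardo's trace theorem; Girault--Raviart for the normal trace) rather than redoing it. Vector- and matrix-valued fields are handled componentwise, or row-wise for $s$. The rows $s_i$ of $s \in \Hdiv[\Md]$ lie in the scalar space $H(\div,\Omega)$, since $(\div s)_i = \div s_i$.

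For (i), I would take the scalar Gagliardo trace $\gamma_0\colon H^1(\Omega)\to H^{1/2}(\Gamma)$ on Lipschitz domains, defined by continuous extension from $C^1(\cl\Omega)$, which is dense in $H^1$. I would set $T_0u = (\gamma_0 u_i)_i$. Surjectivity follows from the existence of a bounded right inverse (lifting), applied componentwise. The kernel statement $\ker T_0 = \Honez$ is likewise the classical scalar result $\ker\gamma_0 = H^1_0(\Omega)$ for Lipschitz domains. Its proof approximates $u$ with zero trace by compactly supported functions, using a partition of unity, local Lipschitz graphs, and the Hardy-type estimate $\int \operatorname{dist}(x,\Gamma)^{-2}|u|^2 < \infty$ (or simple translation into the interior).

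For (ii), I would define $T_\nu$ by duality through Green's formula,
\begin{equation*}
  \langle T_\nu s, T_0 v\rangle_\Gamma = \iprod[\LtwoM]{s}{\grad v} + \iprod[\LtwoR]{\div s}{v}, \qquad v\in\Hone.
\end{equation*}
The right-hand side is bounded by $\|s\|_{\Hdiv[\Md]}\|v\|_{\Hone}$. It vanishes for $v\in\Honez = \ker T_0$, as one checks by testing against $\test$ and using the distributional definition of $\div$. Hence it defines a functional on $\Hone/\ker T_0 \cong \Hphalf$; this isomorphism comes from (i) and the open mapping theorem. So $T_\nu s\in\Hnhalff$, with norm at most $C\|s\|$. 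For $s\in C^1(\cl\Omega,\Md)$ the classical divergence theorem shows that $T_\nu s = s\nu|_\Gamma$.

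For surjectivity, given $g\in\Hnhalff$, I would solve the Neumann problem: find $w\in\Hone$ with
\begin{equation*}
  \iprod{\grad w}{\grad v} + \iprod{w}{v} = \langle g, T_0v\rangle_\Gamma \quad \text{for all } v\in\Hone.
\end{equation*}
Lax--Milgram applies, since the left side is the full $H^1$ inner product and no Korn inequality is needed. Setting $s=\grad w$ gives $\div s = w\in\LtwoR$ distributionally, and Green's formula then gives $T_\nu s = g$.

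For the kernel in (iii), the inclusion $\Hdivz[\Md]\subset\ker T_\nu$ follows from continuity, since $T_\nu$ vanishes on $\test[\Md]$. The converse is the main obstacle. I would argue by Hahn--Banach. Take $\ell\in(\Hdiv[\Md])'$ vanishing on $\test[\Md]$, represented by Riesz as $\ell(t) = \iprod{p}{t} + \iprod{q}{\div t}$. Vanishing on test functions gives $\grad q = p$ in distributions, so $q\in\Hone$. Extending $q$ by zero to $\tilde q$ on $\R^d$, the same identity tested with $\test[\Md]$ on all of $\R^d$ would show $\tilde q\in H^1(\R^d)$ only if $q$ has zero trace. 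Instead I would use Green's formula directly: for $s\in\ker T_\nu$,
\begin{equation*}
  \ell(s) = \iprod{\grad q}{s}+\iprod{q}{\div s} = \langle T_\nu s, T_0 q\rangle_\Gamma = 0,
\end{equation*}
so every such $\ell$ annihilates $\ker T_\nu$. Hence $\ker T_\nu\subset\cl{\test[\Md]} = \Hdivz[\Md]$. This duality trick needs only (i) and the definition of $T_\nu$, so it avoids any cutoff or mollification near the Lipschitz boundary.
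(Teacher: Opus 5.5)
Your proposal is correct and follows essentially the paper's route. The paper cites the scalar results of Girault--Raviart (trace theorem, Green's formula and the normal trace, surjectivity, and the two kernel characterizations) and extends them row-wise. You cite the same classical facts for $T_0$, and for $T_\nu$ you write out the standard proofs from that source directly for matrix fields: defining $T_\nu$ by duality through Green's formula (using $\ker T_0 = H^1_0$), getting surjectivity from the $H^1$-Riesz/Neumann problem, and proving $\ker T_\nu \subseteq \overline{\mathscr D}$ by Hahn--Banach with $q \in H^1$. Each of these steps is valid.
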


\begin{proof}
  For the scalar-vector case, $u \in \Hone[\R]$ and $s \in \Hdiv[\Rd]$,
  the claims are proved in
  \cite[Thms.\ I.1.5, I.2.5, I.2.6, Cor.\ I.2.8]{Girault_Raviart:1986},
  see also \cite[Thms. 6.8.13, 6.9.2]{Kufner_John_Fucik:1977} for $T_0$
  and \Cite{Sohr:2001} for $T_\nu$.
  They generalize readily (row-wise)
  to $u \in \Hone$ and $s \in \Hdiv[\Md]$, respectively.
\end{proof}

We call $T_0$ the \emph{Dirichlet trace map}
and $T_\nu$ the \emph{normal trace map}.
The space $\Hphalf$ is actually defined as the range
$\im T_0 \subset L^2(\Gamma, \Rd)$ whereas
$\Hnhalff$ is defined as the dual space $\Hphalf^*$\ifArxiv\else,\fi\
\cfcite[Eq.\ (I.3.6.9)]{Sohr:2001}.
Let $\bprod{}{}$ denote the duality pairing.
To work with $\Sd \subset \Md$, let $\Hnhalf \subseteq \Hnhalff$
denote the (closed) range of $T_s \dfn T_\nu|_{\Hdiv}$.
Then, similar to \cref{lem:traces} (ii), (iii),
we have the \emph{surjective} bounded linear map
\begin{equation*}
  T_s\: \Hdiv \to \Hnhalf \qtextq{with} \ker T_s = \Hdivz.
\end{equation*}
We will write $T_\nu$ rather than $T_s$
whenever the restriction is irrelevant.
Throughout this work, we consider only those pairs
$(s, u) \in \Hdiv \times \Hone$ that satisfy
\begin{equation*}
  \bprod{T_\nu s}{T_0 u} = 0.
\end{equation*}
In particular, this orthogonality condition
holds for every pair $(s, u)$ in the product of the closed subspaces
\begin{equation*}
  \HdivN = \defset{s \in \Hdiv}{T_\nu s = 0 \on \bdN}
\end{equation*}
and
\begin{equation*}
  \HoneD = \defset{u \in \Hone}{T_0 u = 0 \on \bdD}.
\end{equation*}

\begin{remark}
  \label{rem:Neumann-meaning}
  By definition, the Neumann condition $T_\nu s = 0$ on $\bdN$ means that
  $\bprod{T_\nu s}{g} = 0$ for every $g \in \Hphalf$
  that obeys $g = 0$ on $\Gamma\setminus\bdN$,
  equivalently for every $g \in \im T_0|_{\HoneD}$ (by surjectivity of $T_0$).
\end{remark}

\begin{theorem}[Integration by parts]
  \label{thm:div-grad-integral}
  \LetOG
  For every $s \in \Hdiv$ and $u \in \Hone$ it then holds
  \begin{equation*}
    \iprod[\LtwoR]{\div s}{u} +
    \iprod[\LtwoS]{s}{\Grad u} =
    \bprod{T_\nu s}{T_0 u}
    .
  \end{equation*}
\end{theorem}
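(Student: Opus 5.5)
The plan is to reduce the identity to the standard Green formula with the full gradient and then use symmetry of $s$. First we pass from $\Grad u$ to $\grad u$. For $s \in \Hdiv$ we have $s(x) \in \Sd$ almost everywhere. Since $\Md = \Ad \oplus \Sd$ is an orthogonal decomposition and $\grad u - \Grad u = \frac12(\grad u - \grad u\tp) \in \Ad$ pointwise, it follows that $\iprod[\Md]{s(x)}{\grad u(x)} = \iprod[\Md]{s(x)}{\Grad u(x)}$ almost everywhere. Hence
\[
\iprod[\LtwoS]{s}{\Grad u} = \iprod[\LtwoM]{s}{\grad u}.
\]
It therefore suffices to prove
\[
\iprod[\LtwoR]{\div s}{u} + \iprod[\LtwoM]{s}{\grad u} = \bprod{T_\nu s}{T_0 u}
\]
for all $s \in \Hdiv[\Md]$ and $u \in \Hone$. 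This is the matrix-valued form of the classical Green formula in \cite{Girault_Raviart:1986}. We would, however, sketch its derivation as follows.

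For smooth fields $s \in C^1(\cl\Omega,\Md)$ and $u \in C^1(\cl\Omega,\Rd)$, apply the divergence theorem on the Lipschitz domain $\Omega$ to the vector field $w_j = \sum_i s_{ij} u_i$, that is, $w = s\tp u$. We have $\div w = \sum_{i,j} (\pd_j s_{ij}) u_i + \sum_{i,j} s_{ij}\pd_j u_i = \iprod[\Rd]{\div s}{u} + \iprod[\Md]{s}{\grad u}$ with the row-wise conventions. The boundary integrand is $w\cdot\nu = \iprod[\Rd]{s\nu}{u}$. This gives the formula with the right-hand side $\int_\Gamma (s\nu)\cdot u$. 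By \cref{lem:traces}, that integral equals $\bprod{T_\nu s}{T_0 u}$, since for smooth fields the traces are the restrictions and the duality pairing extends the $L^2(\Gamma)$ product.

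Next we extend by density and continuity. Both sides are continuous bilinear forms on $\Hdiv[\Md] \times \Hone$: the left side by Cauchy--Schwarz in the respective norms, and the right side by boundedness of $T_\nu$ and $T_0$ from \cref{lem:traces}. The main obstacle is density: we need $C^1(\cl\Omega,\Md)$ to be dense in $\Hdiv[\Md]$ and $C^1(\cl\Omega,\Rd)$ to be dense in $\Hone$ for Lipschitz $\Omega$. For $\Hone$ this is standard. For $\Hdiv$ we would cite the density of $C^\infty(\cl\Omega)$ in \cite{Girault_Raviart:1986}, applied row-wise; that result is exactly what underlies the construction of $T_\nu$ there.

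Note that the density argument is carried out in $\Hdiv[\Md]$, not in $\Hdiv$. This avoids having to approximate by symmetric smooth fields, because the symmetry of $s$ was already used in the first step.
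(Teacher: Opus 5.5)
Your proposal is correct and follows the paper's own proof. The paper likewise invokes the Green formula $\iprod[\LtwoR]{\div s}{u} + \iprod[\LtwoM]{s}{\grad u} = \bprod{T_\nu s}{T_0 u}$ for not necessarily symmetric $s \in \Hdiv[\Md]$, obtained by citing the scalar case and applying it row-wise, and then uses the symmetry of $s$ to replace $\grad u$ by $\Grad u$. Your smooth-case-plus-density sketch simply unpacks that cited classical result, and, like the paper, you work in $\Hdiv[\Md]$, so no symmetric smooth approximation is needed.
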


\begin{proof}
  For the scalar-vector case, $u \in \Hone[\R]$ and $s \in \Hdiv[\Rd]$,
  the result is proved in
  \cite[Thm.\ 1.5.3.1]{Grisvard:1985}, \cite[Thm. 3.1.1]{Necas:2012},
  \cite[Lem.\ II.1.2.3]{Sohr:2001},
  and it generalizes row-wise to
  $u \in \Hone$ and (not necessarily symmetric) $s \in \Hdiv[\Md]$,
  \begin{equation*}
    \iprod[\LtwoR]{\div s}{u} +
    \iprod[\LtwoM]{s}{\grad u} =
    \bprod{T_\nu s}{T_0 u}.
  \end{equation*}
  For symmetric $s \in \Hdiv$, the identity
  $\iprod[\LtwoM]{s}{\grad u} = \iprod[\LtwoS]{s}{\Grad u}$ gives the claim.
  (In \Cite{Conti2018} the result for $s \in \Hdiv[\Md]$
  is stated without proof.)%
  \ifArxiv\else\hbox{\quad}\fi
\end{proof}

\begin{lemma}[Moore--Penrose inverse]
  \label{lem:psi}
  \ifArxiv
  Let $X, Y$ be Hilbert spaces and $A \in L(X, Y)$ with closed range. \else
  Let $A \in L(X, Y)$ with closed range where $X, Y$ are Hilbert spaces. \fi
  Then, $A$ has a unique pseudoinverse $A^+ \in L(Y, X)$
  that satisfies the Penrose axioms (with orthogonal projections
  $P_{\im A}$ and $P_{\ker A}^\perp = P_{(\ker A)^\perp}$),
  \begin{align*}
    (1)\kern1.7em A A^+ A &= A, & (3)\quad (A A^+)^* &= A A^+ = P_{\im A}, \\
    (2)\quad A^+ A A^+ &= A^+, & (4)\quad (A^+ A)^* &= A^+ A = P_{\ker A}^\perp.
  \end{align*}
  If $A$ is injective with closed range,
  then $A^+$ is a left-inverse and hence surjective;
  if $A$ is surjective,
  then $A^+$ is a right-inverse and hence injective.
  Moreover, it holds
  \begin{align*}
    A^+ &= \Inv{(A^* A)} A^* \qtext{($A$ injective)},
    &A^+ &= A^* \Inv{(A A^*)} \qtext{($A$ surjective)}.
  \end{align*}
\end{lemma}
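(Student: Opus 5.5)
I will construct $A^+$ directly from the orthogonal decompositions $X = \ker A \oplus (\ker A)^\perp$ and $Y = \im A \oplus (\im A)^\perp$. Since $\im A$ is closed, both are genuine orthogonal decompositions with bounded orthogonal projections. The restriction $A_0 \dfn A|_{(\ker A)^\perp}\: (\ker A)^\perp \to \im A$ is a bounded linear bijection between Hilbert spaces: it is injective because $(\ker A)^\perp \cap \ker A = \{0\}$, and surjective because $A x = A P_{\ker A}^\perp x$. By the bounded inverse theorem (open mapping theorem), $A_0^{-1}\: \im A \to (\ker A)^\perp$ is bounded. I then define
\begin{equation*}
  A^+ \dfn A_0^{-1} P_{\im A} \in L(Y, X).
\end{equation*}

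The Penrose axioms follow directly from this definition. First, $A A^+ = A A_0^{-1} P_{\im A} = P_{\im A}$. Second, $A^+ A = A_0^{-1} A = A_0^{-1} A_0 P_{\ker A}^\perp = P_{\ker A}^\perp$. Both products are orthogonal projections and hence self-adjoint, which gives (3) and (4). Axiom (1) reads $A A^+ A = P_{\im A} A = A$. Axiom (2) reads $A^+ A A^+ = P_{\ker A}^\perp A^+ = A^+$, since $A^+$ maps into $(\ker A)^\perp$.

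For uniqueness, suppose $B$ and $C$ both satisfy (1)--(4). I will use the standard algebraic chain
\begin{equation*}
  B = BAB = B (AB)^* = B B^* A^* = B B^* (ACA)^* = B (AB)^*(AC)^* = BABAC = BAC,
\end{equation*}
and symmetrically $C = CAC = (CA)^* C = A^* C^* C = \dots = BAC$. Hence $B = C$. This step is purely algebraic; its only ingredient is $(ST)^* = T^* S^*$ for bounded operators.

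For the injective and surjective cases:
\begin{itemize}
  \item If $A$ is injective, then $P_{\ker A}^\perp = \id$, so $A^+ A = \id$. Thus $A^+$ is a left inverse and is therefore surjective.
  \item If $A$ is surjective, then $P_{\im A} = \id$, so $A A^+ = \id$. Thus $A^+$ is a right inverse and is therefore injective.
\end{itemize}

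The explicit formulas are the main technical point. Here I must show that $A^* A$ (respectively $A A^*$) is invertible in $L(X)$ (respectively $L(Y)$). In the injective closed-range case, the bounded inverse theorem gives $\norm{Ax} \ge c \norm{x}$. Hence $\iprod{A^*A x}{x} \ge c^2 \norm{x}^2$, so $A^*A$ is coercive and self-adjoint, and by Lax--Milgram it is boundedly invertible. Then $\Inv{(A^*A)} A^*$ is a left inverse of $A$ that vanishes on $\ker A^* = (\im A)^\perp$. So it coincides with $A^+$, because both agree on $\im A$ and on its orthogonal complement. In the surjective case, $A^*$ is injective with closed range (closed range theorem) and bounded below. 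Hence $A A^*$ is coercive, and $A^* \Inv{(A A^*)}$ is a right inverse of $A$ with range in $\im A^* = (\ker A)^\perp$. It therefore equals $A_0^{-1} = A^+$.
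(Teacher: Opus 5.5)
Your proof is correct and uses the paper's construction: the paper also defines $A^+$ as the inverse of $A|_{(\ker A)^\perp}$ on $\im A$ and as zero on $(\im A)^\perp$, which is your $A_0^{-1} P_{\im A}$, and then simply says to ``verify the claims.'' Your algebraic uniqueness chain and your coercivity/Lax--Milgram argument for the invertibility of $A^*A$ and $AA^*$ supply exactly the details the paper leaves to the reader.
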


\begin{proof}
  Observe that $A|_{(\ker A)^\perp}\: \ker A^\perp \to \im A$ is invertible,
  define $A^+|_{\im A} \dfn \Inv{(A|_{(\ker A)^\perp})}$
  and $A^+|_{(\im A)^\perp} \dfn 0$, and verify the claims.
  (See also \Cites{Penrose:1955,Ben-Israel_Greville:2003}.)
\end{proof}

\begin{lemma}[Trace extension]
  \label{lem:ext-D}
  \LetODN
  Then, there \ifArxiv exists \else is \fi
  a unique bounded linear extension operator $\extD$
  that maps Dirichlet boundary data from $\bdD$
  to the orthogonal complement of $\HoneD$ in $\Hone\:$
  \begin{align*}
    \extD\: \Hphalf &\to \HoneD^\perp,
    &T_0 \extD g &= g \on \bdD \qfor g \in \Hphalf.
  \end{align*}
\end{lemma}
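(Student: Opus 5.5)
Define $\extD$ by composing the Moore--Penrose inverse of \cref{lem:psi} with the surjective Dirichlet trace $T_0$ of \cref{lem:traces}(i). Since $T_0\: \Hone \to \Hphalf$ is bounded and surjective, \cref{lem:psi} yields $T_0^+ \in L(\Hphalf, \Hone)$. It is a right inverse, $T_0 T_0^+ = \id$, and its range is $(\ker T_0)^\perp$, because $T_0^+ T_0 = P_{\ker T_0}^\perp$ and $T_0^+ = T_0^+ T_0 T_0^+$. This operator lifts all of $\Gamma$, not only $\bdD$, and lands in $\Honez^\perp \supseteq \HoneD^\perp$ (since $\Honez \subseteq \HoneD$). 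It therefore has to be corrected by projecting.

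I would set
\begin{equation*}
  \extD \dfn P_{\HoneD^\perp} T_0^+,
\end{equation*}
where $P_{\HoneD^\perp}$ is the orthogonal projection in $\Hone$ onto $\HoneD^\perp$. The space $\HoneD$ is closed, since it is the preimage under the bounded map $T_0$ of the closed set of traces vanishing on $\bdD$. Hence the projection is well defined, and $\extD$ is bounded and linear with values in $\HoneD^\perp$. For the boundary condition, write $T_0^+ g = \extD g + w$ with $w \in \HoneD$. Then $T_0 w = 0$ on $\bdD$, so $T_0 \extD g = T_0 T_0^+ g - T_0 w = g$ on $\bdD$.

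For uniqueness, let $\extD'$ be any map with the stated properties. Then $v \dfn \extD g - \extD' g$ lies in $\HoneD^\perp$, and $T_0 v = 0$ on $\bdD$ puts $v$ in $\HoneD$, so $v = 0$. Uniqueness thus holds pointwise, even without assuming linearity or boundedness. It also shows that $\extD g$ depends only on $g|_{\bdD}$: if $g_1 = g_2$ on $\bdD$, then $\extD g_1 - \extD g_2$ lies in $\HoneD \cap \HoneD^\perp = \{0\}$.

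The main point needing care is the meaning of ``$= 0$ on $\bdD$'' for $H^{1/2}$ functions. I would interpret it as the restriction in $L^2(\bdD)$, using $\Hphalf \subset L^2(\Gamma, \Rd)$. With this reading the condition is linear and closed, which is all the closedness and decomposition arguments above require. No extension theorem beyond the surjectivity in \cref{lem:traces} is needed.
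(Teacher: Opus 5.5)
Your proposal is correct and is essentially the paper's proof: the paper likewise sets $\extD = P^\perp T_0^+$, where $P^\perp$ is the orthogonal projection onto $\HoneD^\perp$. It then verifies $T_0 \extD g = g - T_0 P T_0^+ g = g$ on $\bdD$, because $P T_0^+ g \in \HoneD$. Your uniqueness argument via $\HoneD \cap \HoneD^\perp = \{0\}$ is a correct addition, since the paper asserts uniqueness of $\extD$ but does not spell it out.
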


\begin{proof}
  Since $T_0$ is surjective with $\ker T_0 = \Honez$,
  $\extD_0 \dfn T_0^+$ is the unique right-inverse of $T_0$
  such that $\im \extD_0 = \Honez^\perp$.
  Then, the desired extension map is $\extD \dfn P^\perp \extD_0$
  where $P, P^\perp$ denote the orthogonal projections
  onto $\HoneD$ and its complement $\HoneD^\perp$ in $\Hone$.
  Indeed, $T_0 \extD g = g$ on $\bdD$ follows from
  \begin{equation*}
    T_0 \extD g
    =
    T_0 P^\perp \extD_0 g
    =
    T_0 (\id[\Hone] - P) \extD_0 g
    =
    g - T_0 P \extD_0 g
    .
    \qedhere
  \end{equation*}
\end{proof}

\begin{lemma}[Normal trace lifting]
  \label{lem:ext-N}
  \LetODN
  Then, there \ifArxiv exists \else is \fi
  a unique bounded linear lifting operator $\extN$
  that maps Neumann boundary data from $\bdN$
  to the orthogonal complement of $\HdivN$ in $\Hdiv\:$
  \begin{align*}
    \extN\: \Hnhalf &\to \HdivN^\perp,
    &T_\nu \extN h &= h \on \bdN \qfor h \in \Hnhalf.
  \end{align*}
\end{lemma}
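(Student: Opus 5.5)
The argument will run parallel to the proof of \cref{lem:ext-D}, with $T_s\: \Hdiv \to \Hnhalf$ in place of $T_0$. Since $T_s$ is a surjective bounded linear map, its range is closed, so \cref{lem:psi} applies. It gives the Moore--Penrose inverse $\extN_0 \dfn T_s^+ \in L(\Hnhalf, \Hdiv)$ as a right-inverse of $T_s$, i.e.\ $T_s \extN_0 h = h$ for all $h$. By Penrose axiom (4), $\im \extN_0 = (\ker T_s)^\perp = \Hdivz^\perp$.

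Next, let $P, P^\perp$ denote the orthogonal projections in $\Hdiv$ onto the closed subspace $\HdivN$ and onto its complement $\HdivN^\perp$. I will define $\extN \dfn P^\perp \extN_0$. This is bounded and linear as a composition of bounded linear maps, and by construction it maps into $\HdivN^\perp$.

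To check the boundary property, write
\begin{equation*}
  T_\nu \extN h = T_\nu (\id[\Hdiv] - P)\extN_0 h = h - T_\nu P \extN_0 h .
\end{equation*}
Because $P \extN_0 h \in \HdivN$, we have $T_\nu P\extN_0 h = 0$ on $\bdN$. In the sense of \cref{rem:Neumann-meaning}, this gives $T_\nu \extN h = h$ on $\bdN$. The only subtlety is that ``$=h$ on $\bdN$'' is to be read distributionally, as equality of the pairings with all $g \in \im T_0|_{\HoneD}$. Subtracting an element of $\HdivN$ changes $T_\nu$ only by something vanishing on $\bdN$ in exactly this sense, and the pairing is linear in its first argument.

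Uniqueness is the one step needing care; I read it in the same sense as in \cref{lem:ext-D}. Suppose $\extN'$ is another linear map into $\HdivN^\perp$ with the same boundary property. Then for each $h$, the difference $(\extN - \extN')h$ lies in $\HdivN^\perp$, and its normal trace vanishes on $\bdN$. The latter means the difference lies in $\HdivN$. Since $\HdivN \cap \HdivN^\perp = \{0\}$, we get $\extN = \extN'$.

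Incidentally, $\extN$ does not depend on the choice of right-inverse. Any two right-inverses differ by a map into $\ker T_s = \Hdivz \subseteq \HdivN$, and $P^\perp$ annihilates $\HdivN$. The main obstacle is purely interpretive: making precise that ``$T_\nu s = h$ on $\bdN$'' is a well-defined relation on $\Hnhalf$, which I will handle via \cref{rem:Neumann-meaning}.
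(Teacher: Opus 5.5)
Your proposal is correct and follows the paper's proof essentially verbatim: take the Moore--Penrose right-inverse $T_s^+$ (whose range is $\Hdivz^\perp$), compose it with the orthogonal projection $P^\perp$ onto $\HdivN^\perp$, and verify $T_\nu \extN h = h - T_\nu P T_s^+ h = h$ on $\bdN$. Your explicit uniqueness argument and your remark that $\extN$ does not depend on the chosen right-inverse are correct and fill in details the paper leaves implicit; the uniqueness argument is that the difference of two such liftings lies in $\HdivN \cap \HdivN^\perp = \{0\}$.
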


\begin{proof}
  Since $\im T_s = \Hnhalf$ and $\ker T_s = \Hdivz$,
  $\extN_\nu \dfn T_s^+$ is the unique right-inverse of $T_s$
  such that $\im \extN_\nu = \Hdivz^\perp$.
  Then, the desired lifting map is $\extN \dfn P^\perp \extN_\nu$
  where $P, P^\perp$ denote the orthogonal projections
  onto $\HdivN$ and $\HdivN^\perp$ in $\Hdiv$.
  Indeed, $T_\nu \extN h = h$ on~$\bdN$ follows from
  \begin{equation*}
    T_\nu \extN h
    =
    T_\nu P^\perp \extN_\nu h
    =
    T_\nu (\id[\Hdiv] - P) \extN_\nu h
    =
    h - T_\nu P \extN_\nu h
    .
    \qedhere
  \end{equation*}
\end{proof}

\begin{remark}
  The Hilbert space structure of $\Hphalf$ and $\Hnhalf$ is not needed
  in \cref{lem:ext-D,lem:ext-N} (or elsewhere in this work)
  because $T_0$ and $T_s$ are surjective.
  Indeed, $A^+$ in \cref{lem:psi} still exists for surjective $A$
  if $Y$ is a Banach space and the decomposition
  $\im A \oplus (\im A)^\perp = Y \oplus \set{0}$
  is regarded to be ``orthogonal''.
\end{remark}

If $X$, $Y$ are Hilbert spaces, an \emph{unbounded linear operator}
$T\: \dom(T) \subseteq X \to Y$ is a (not necessarily bounded) linear map
whose domain $\dom(T)$ is a linear subspace of $X$,
with range $\im T \subseteq Y$. It is called \emph{closed} iff
$\graph(T) = \defset{(x, Tx)}{x \in \dom(T)}$
is a closed subspace of $X \times Y$.
If $T$ is \emph{densely defined}, i.e., $\dom(T) \subseteq X$ is dense,
we define the adjoint $T^*\: \dom(T^*) \subseteq Y \to X$ by letting
\begin{equation*}
  \dom(T^*)
  \dfn
  \defset{y \in Y}
  {x \mapsto \iprod[Y]{T x}{y} \text{ is continuous at every } x \in \dom(T)}
\end{equation*}
and for each $y \in \dom(T^*)$ determining $T^* y$ by the relation
\begin{equation*}
  \iprod[Y]{T x}{y} = \iprod[X]{x}{T^* y} \qfor x \in \dom(T).
\end{equation*}
We note that for any densely defined $T\: \dom(T) \subseteq X \to Y$,
the adjoint $T^*$ is closed, \cfcite[Thm.\ 13.9]{Rudin:1973},
that $T^*$ is densely defined and $T^{**} = T$
if $T$ is closed in addition, \cfcite[Thm.\ 13.12]{Rudin:1973},
and further that for $T_1\: \dom(T_1) \subseteq X \to Y$
and $T_2\: \dom(T_2) \subseteq Y \to X$,
\begin{equation*}
  \iprod[Y]{T_1 x}{y} = \iprod[X]{x}{T_2 y} \qfor x \in \dom(T_1)
\end{equation*}
implies $T_1^* \supseteq T_2$, i.e., $T_1^*$ is an extension of $T_2$
in \ifArxiv the sense \fi that $\graph(T_2) \subseteq \graph(T_1^*)$.
We refer to \cite[Ch.\ 13]{Rudin:1973} for further information
on unbounded linear operators.

\begin{lemma}
  \label{lem:closed-op}
  Let $T\: \dom(T) \subseteq X \to Y$ be a closed linear operator,
  where $X, Y$ are Hilbert spaces.
  Equip $\dom(T)$ with the graph norm of $T$,
  $\norm[T]{x}^2 \dfn \norm[X]{x}^2 + \norm[Y]{T x}^2$,
  in order to make it a Hilbert space.
  Let $S\: \dom(S) \subset X \to Y$ be a restriction of $T$.
  Then:
  \begin{enumerate}
  \item The closure of $S$ is $\cl S = T|_{\cl{\dom(S)}}$,
    where $\cl{\dom(S)}$ is the closure of $\dom(S)$
    in the graph norm of $T$.
  \item $S$ is closed if and only if
    $\dom(S)$ is a closed subspace of $\dom(T)$ in the graph norm of $T$.
  \end{enumerate}
\end{lemma}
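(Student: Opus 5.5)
The plan is to reduce everything to elementary facts about the graph of $T$ as a closed subspace of the Hilbert space $X \times Y$. Since $T$ is closed, $\graph(T)$ is closed in $X \times Y$. The map $\iota\colon \dom(T) \to \graph(T)$, $x \mapsto (x, Tx)$, is an isometric linear bijection when $\dom(T)$ carries the graph norm $\norm[T]{\cdot}$. This is immediate from the definition of the product norm, and it is also what makes $\dom(T)$ a Hilbert space, as asserted in the statement. Because $S$ is a restriction of $T$, we have $\graph(S) = \iota(\dom(S)) \subseteq \graph(T)$. Every question about closures of graphs can therefore be transported through $\iota$ to closures of subsets of $\dom(T)$ in the graph norm.

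For (i), recall that the closure $\cl S$ of a closable operator is the operator whose graph is the closure of $\graph(S)$ in $X\times Y$. First I will check that $S$ is closable. The closure of $\graph(S)$ lies in the closed set $\graph(T)$, so it is the graph of a linear operator: no pair $(0,y)$ with $y\neq 0$ can occur. Next, the closure of $\graph(S)$ in $X \times Y$ equals its closure inside $\graph(T)$, since $\graph(T)$ is closed. Because $\iota$ is an isometric isomorphism onto $\graph(T)$, this closure is $\iota\bigl(\cl{\dom(S)}\bigr)$, where the closure of $\dom(S)$ is taken in $\norm[T]{\cdot}$. Hence
\[
\cl{\graph(S)} = \defset{(x, Tx)}{x \in \cl{\dom(S)}} = \graph\bigl(T|_{\cl{\dom(S)}}\bigr),
\]
which is the claim $\cl S = T|_{\cl{\dom(S)}}$.

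For (ii), $S$ is closed iff $\graph(S)$ is closed in $X\times Y$. By the same argument this holds iff $\graph(S)$ is closed in $\graph(T)$, and hence iff $\dom(S) = \iota^{-1}(\graph(S))$ is closed in $(\dom(T), \norm[T]{\cdot})$. Alternatively, (ii) follows directly from (i): $S$ is closed iff $S = \cl S = T|_{\cl{\dom(S)}}$, iff $\dom(S) = \cl{\dom(S)}$.

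I do not expect a real obstacle here. The only point needing care is the step where the closure of $\graph(S)$ in $X\times Y$ coincides with its closure in the subspace $\graph(T)$. This relies on closedness of $T$, and I will state it explicitly, because it is the one place where that hypothesis enters.
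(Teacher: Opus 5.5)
Your proof is correct. The paper does not prove this lemma itself; it only cites \cite[Lem.~6.1]{Kurula_Zwart:2012}. Your argument supplies the standard self-contained proof that this citation stands in for: the map $x \mapsto (x, Tx)$ is an isometric isomorphism from $(\dom(T), \|\cdot\|_T)$ onto the closed subspace $\graph(T)$. The step you single out is exactly where the hypothesis matters, and it is handled correctly: because $\graph(T)$ is closed, the closure of $\graph(S)$ in $X \times Y$ equals its closure relative to $\graph(T)$. Your closability check is also right, since a pair $(0,y)$ with $y \neq 0$ cannot lie in $\graph(T)$. Both derivations of (ii), the direct one and the one via (i), are sound.

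Writing the argument out also shows that (i) and (ii) use no Hilbert structure. They hold for closed operators between arbitrary normed spaces. Completeness of $X \times Y$ enters only in the side assertion that $\dom(T)$ with the graph norm is a Hilbert (or Banach) space.
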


\begin{proof}
  See \cite[Lem.~6.1]{Kurula_Zwart:2012}.
\end{proof}

\begin{lemma}
  \label{lem:inequalities}
  \LetO
  Then, the following properties hold:
  \begin{enumerate}
  \item (First Korn's inequality\ifArxiv
    \else: \fi \cite[Thm.~3.5]{Lewicka:2023})
    There exists $C > 0$ such that
    \begin{equation*}
      \norm[\LtwoM]{\grad u}^2 \le
      C \bigl( \norm[\LtwoR]{u}^2 + \norm[\LtwoS]{\Grad u}^2 \bigr)
      \qfor u \in \Hone.
    \end{equation*}
  \item It holds $\norm[\Md]{\frac12 (A + A\tp)} \le \norm[\Md]{A}$
    for every $A \in \Md$, and hence
    \begin{equation*}
      \norm[\LtwoS]{\Grad u} \le \norm[\LtwoM]{\grad u}
      \qfor u \in \Hone.
    \end{equation*}
  \end{enumerate}
\end{lemma}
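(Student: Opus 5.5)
Part (i) is cited from the literature (\cite[Thm.~3.5]{Lewicka:2023}), so the plan is only to explain how it is obtained; the real work is part (ii). For (i), I would take the standard route. First prove the inequality for smooth fields via the identity $\pd_j\pd_k u_i = \pd_j e_{ik} + \pd_k e_{ij} - \pd_i e_{jk}$, where $e = \Grad u$. Combined with the Nečas lemma $\norm[L^2]{f} \le C(\norm[H^{-1}]{f} + \norm[H^{-1}]{\grad f})$ on Lipschitz domains, applied to $f = \pd_j u_i$, this gives
\begin{equation*}
  \norm[\LtwoM]{\grad u} \le C \bigl( \norm[H^{-1}]{\grad u} + \norm[H^{-1}]{\grad\grad u} \bigr) \le C' \bigl( \norm[\LtwoR]{u} + \norm[\LtwoS]{\Grad u} \bigr).
\end{equation*}
Density of smooth functions in $\Hone$ on Lipschitz domains then extends this to all of $\Hone$. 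The main obstacle is the Nečas lemma, which depends on the Lipschitz regularity of $\Gamma$. Since it is standard, I would invoke the cited reference rather than reprove it.

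For (ii), I would use the orthogonal splitting $\Md = \Sd \oplus \Ad$ with respect to the Frobenius product, recalled at the start of this section. For $A \in \Md$ write $A = \frac12(A + A\tp) + \frac12(A - A\tp)$. The first summand lies in $\Sd$ and the second in $\Ad$, and these are orthogonal because for $S \in \Sd$ and $K \in \Ad$ we have $\tr(S\tp K) = \tr(K S) = \tr(S K) = -\tr(S\tp K)$, so the product vanishes. Pythagoras then gives
\begin{equation*}
  \norm[\Md]{A}^2 = \norm[\Md]{\tfrac12(A + A\tp)}^2 + \norm[\Md]{\tfrac12(A - A\tp)}^2 \ge \norm[\Md]{\tfrac12(A + A\tp)}^2 .
\end{equation*}
In other words, $A \mapsto \frac12(A + A\tp)$ is the orthogonal projection onto $\Sd$ and therefore has norm at most one.

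To obtain the $L^2$ statement, apply this pointwise with $A = \grad u(x)$ for almost every $x \in \Omega$, noting that $\Grad u(x) = \frac12(\grad u(x) + \grad u(x)\tp)$ by the row-wise definitions. Squaring and integrating over $\Omega$ gives $\norm[\LtwoS]{\Grad u}^2 \le \norm[\LtwoM]{\grad u}^2$, and taking square roots yields the claim. No step here is delicate; the only point to check is measurability of the integrand, which is immediate.
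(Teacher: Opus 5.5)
Your proposal is correct and agrees with the paper. The paper gives no argument of its own: it cites \cite[Thm.~3.5]{Lewicka:2023} for (i) and states (ii) without proof, and your orthogonal-projection argument onto $\Sd$ (Pythagoras for $\Md = \Sd \oplus \Ad$, applied pointwise to $\grad u(x)$ and then integrated) is the elementary justification the paper leaves implicit. In your Nečas-lemma sketch for (i), the density step can be dropped, because the identity $\pd_j\pd_k u_i = \pd_j e_{ik} + \pd_k e_{ij} - \pd_i e_{jk}$ already holds in the distributional sense for every $u \in \Hone$.
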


\begin{remark}
  \label{rem:closed-op}
  Note that $\norm[\Hone]{}$ is the graph norm of
  $\grad\: \Hone \subset \LtwoR \to \LtwoM$
  and $\norm[\Hdiv]{}$ is the graph norm of
  $\div\: \Hdiv \subset \LtwoS \to \LtwoR$.
  Furthermore, $\norm[\Hone]{}$ is equivalent to the graph norm of
  $\Grad\: \Hone \subset \LtwoR \to \LtwoS$ by \cref{lem:inequalities}.
  Thus, \cref{lem:closed-op} (ii) provides
  that $\div, \grad$ and $\Grad$ are closed operators
  since $\Hdiv$ and $\Hone$ are complete spaces in the respective graph norms.
\end{remark}

\begin{theorem}[Duality of $\div$ and $\Grad$]
  \label{thm:div-grad-duality}
  \LetODN
  Let $G$ be a closed subspace of $\Hone$ that contains $\Honez$ and let
  \begin{equation*}
    D \dfn \defset{s \in \Hdiv}{\bprod{T_\nu s}{T_0 u} = 0 \tfor u \in G}.
  \end{equation*}
  Then, the following properties hold:
  \begin{enumerate}
  \item The set $D$ is a closed subspace of $\Hdiv$ that contains $\Hdivz$.
  \item The unbounded \ifArxiv\else linear \fi operators
    $\Grad|_G\: G \subseteq \LtwoR \to \LtwoS$ and
    $\div|_D\: D \subseteq \LtwoS \to \LtwoR$ satisfy
    \begin{equation*}
      \Grad|_G^* = -\div|_D
      \qtextq{and}
      -\div|_D^* = \Grad|_G.
    \end{equation*}
  \item $G = \HoneD \implies D = \HdivN$.
  \end{enumerate}
\end{theorem}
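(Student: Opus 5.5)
For (i), I write $D = \bigcap_{u \in G} \ker\bigl(s \mapsto \bprod{T_\nu s}{T_0 u}\bigr)$. Each functional $s \mapsto \bprod{T_\nu s}{T_0 u}$ is bounded on $\Hdiv$ because $T_s$ is bounded, so $D$ is an intersection of closed subspaces and hence closed. If $s \in \Hdivz = \ker T_s$, then $T_\nu s = 0$ and all the pairings vanish, so $\Hdivz \subseteq D$.

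For (ii), both operators are densely defined: $G \supseteq \Honez \supseteq \test$ and $D \supseteq \Hdivz \supseteq \test[\Sd]$, and test functions are dense in $L^2$. \Cref{thm:div-grad-integral} gives $\iprod[\LtwoS]{\Grad u}{s} = \iprod[\LtwoR]{u}{-\div s}$ for all $u \in G$ and $s \in D$, since the boundary term vanishes by definition of $D$. Hence $\Grad|_G^* \supseteq -\div|_D$. For the reverse inclusion I take $s \in \dom(\Grad|_G^*)$ and set $f \dfn \Grad|_G^* s \in \LtwoR$. Testing with $u \in \test \subseteq G$ gives $\iprod{s}{\Grad u} = \iprod{f}{u}$. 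Since $s$ is symmetric, $\iprod{s}{\Grad u} = \iprod{s}{\grad u}$, so $\div s = -f$ in the distributional sense. Thus $s \in \Hdiv$, because $s \in \LtwoS$ and $\div s \in \LtwoR$. Applying \cref{thm:div-grad-integral} to an arbitrary $u \in G$ then yields $\bprod{T_\nu s}{T_0 u} = \iprod{\div s}{u} + \iprod{s}{\Grad u} = -\iprod{f}{u} + \iprod{f}{u} = 0$. Hence $s \in D$ and $\Grad|_G^* s = -\div s$, which gives equality. The second identity follows by taking adjoints: $\Grad|_G$ is closed by \cref{lem:closed-op}~(ii) and \cref{rem:closed-op}, because $G$ is closed in $\Hone$, whose norm is equivalent to the graph norm of $\Grad$. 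Therefore $\Grad|_G = \Grad|_G^{**} = (-\div|_D)^* = -\div|_D^*$.

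For (iii), $G = \HoneD$ is closed and contains $\Honez$. By \cref{rem:Neumann-meaning}, $T_\nu s = 0$ on $\bdN$ means exactly that $\bprod{T_\nu s}{g} = 0$ for all $g \in \im T_0|_{\HoneD}$, which is the defining condition of $D$; hence $D = \HdivN$.

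The main obstacle is the reverse inclusion in (ii), and specifically the regularity step. One must be careful that the adjoint is taken with respect to $\LtwoS$, so the test tensors are symmetric. The distributional divergence identity is obtained only after passing from $\Grad u$ to $\grad u$ using the symmetry of $s$; only then can \cref{thm:div-grad-integral} be applied to all of $G$ to recover the boundary condition.
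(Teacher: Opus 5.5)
Your proof is correct. For (ii) and (iii) it follows the paper. Integration by parts gives $-\div|_D \subseteq \Grad|_G^*$. For the converse, symmetry of $s$ turns the adjoint relation on test fields into $\div s \in \LtwoR$, and integration by parts over all of $G$ then shows the boundary pairing vanishes. Closedness of $\Grad|_G$ gives the second identity by biduality, and (iii) is read off from \cref{rem:Neumann-meaning}. Taking $f = \Grad|_G^* s$ directly is slightly cleaner than the paper, which rebuilds this element from test functions via Riesz--Fr\'echet. It also makes explicit why $\bprod{T_\nu s}{T_0 u} = 0$ for every $u \in G$, a step the paper states tersely.

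The genuine difference is in (i). You prove closedness of $D$ up front, as the intersection over $u \in G$ of the kernels of $s \mapsto \bprod{T_\nu s}{T_0 u}$. Each of these functionals is bounded on $\Hdiv$ because $T_\nu$ is bounded into $\Hnhalff = \Hphalf^*$. The paper instead proves (ii) first and derives (i) from it. There, $-\div|_D = \Grad|_G^*$ is closed because it is an adjoint, so \cref{lem:closed-op}~(ii) with $S = -\div|_D$, $T = -\div$ forces $D$ to be closed in the graph norm of $\div$, i.e., in $\Hdiv$.

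Your route is more elementary and independent of (ii), needing only the trace lemma. The paper's route stays inside the unbounded-operator framework of Kurula--Zwart. It shows closedness of $D$ as a consequence of the duality: the domain of an adjoint that restricts a closed operator is automatically closed in that operator's graph norm. Similarly, your direct check $\test[\Sd] \subseteq \Hdivz \subseteq D$ for density of $D$ replaces the implicit fact that the adjoint of a closed, densely defined operator is densely defined.
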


\begin{proof}
  We follow the proof for the scalar-vector case
  of \cite[Thm.~6.2]{Kurula_Zwart:2012},
  denoting by $\dist$ the space of distributions
  associated with $\test$ and making suitable modifications
  for the (symmetric) vector-tensor case.
  We prove (ii), then (iii), then~(i).
  \begin{enumerate}
  \item[(ii)]
    Since $\Honez \subseteq G$ and $\Honez$ is dense in $\LtwoR$,
    $G$ is dense in $\LtwoR$. Let $s$ be any element of $D$.
    From \cref{thm:div-grad-integral} and by the definition of $D$,
    we then have
    \begin{equation*}
      \iprod[\LtwoR]{-\div s}{u} = \iprod[\LtwoS]{s}{\Grad u} \qfor u \in G.
    \end{equation*}
    Thus, $-\div|_D \subseteq \Grad|_G^*$.
    Now take $s \in \dom(\Grad|_G^*) \subset \LtwoS$.
    Since $\dom(\Grad|_G) = G$ and hence
    \begin{math}
      \dom(\Grad|_G^*)
      =
      \defset{t \in \LtwoS}
      {G \owns v \mapsto \iprod[\LtwoS]{t}{\Grad v} \text{ is bounded}},
    \end{math}
    there exists $C > 0$ such that for all $v \in \test \subset G$,
    \begin{equation*}
      \abs{\iprod[\LtwoS]{s}{\Grad v}} \le C \norm[\LtwoR]{v}.
    \end{equation*}
    By density of $\test$, the bounded linear map
    $v \mapsto \iprod[\LtwoS]{s}{\Grad v}$
    can hence be extended to $\LtwoR$ and, by the Riesz--Fréchet theorem,
    there exists $u \in \LtwoR$ such that every $v \in \test$ satisfies
    \begin{equation*}
      \iprod[\LtwoR]{u}{v}
      =
      \iprod[\LtwoS]{s}{\Grad v}
      =
      \sprod[\dist, \test]{s}{\grad v}
      ,
    \end{equation*}
    where the last step uses that $s = s\tp$.
    Hence, $\div s = -u \in \LtwoR$ in the distributional sense,
    which implies that $s \in \Hdiv$.
    From \cref{thm:div-grad-integral}, we have
    \begin{equation*}
      \bprod{T_\nu s}{T_0 u} = 0,
    \end{equation*}
    which implies $s \in D$ and $\Grad|_G^* s = u = -\div s$.
    Thus, $\Grad|_G^* \subseteq -\div|_D$.
    The bidirectional inclusion proves the first equality.
    Since $\Grad|_G$ is closed by \cref{lem:closed-op} (ii)
    and Remark \labelcref{rem:closed-op},
    we \ifArxiv also \fi have $ \Grad|_G = \Grad|_G^{**} = -\div|_D^*$.
  \item[(iii)]
    Let $G = \HoneD$. By definition, $D \subseteq \HdivN$.
    Let $s \in \HdivN$ and $u \in G$.
    Then $\bprod{T_\nu s}{T_0 u} = 0$ according to
    Remark \labelcref{rem:Neumann-meaning},
    and hence also $\HdivN\subseteq D$.
  \item[(i)]
    Clearly, $\Hdivz = \ker T_s \subseteq D \subseteq \Hdiv \subset \LtwoS$.
    \ifArxiv Now, the adjoint \else The adjoint \fi operator
    $\Grad|_G^* = -\div|_D$ established in (ii) is necessarily closed.
    Therefore, assertion (ii) of \cref{lem:closed-op}
    applied to $S = -\div|_D$ and $T = -\div$
    implies that $D = \dom(S)$ is a closed subspace of $\Hdiv = \dom(T)$.
    \qedhere
  \end{enumerate}
\end{proof}

\begin{remark}
  Our claim (iii) in \cref{thm:div-grad-duality}
  is more general than in \Cite{Kurula_Zwart:2012} where only
  \ifArxiv the cases \fi
  $G = H_0^1(\Omega)$ and $G = H^1(\Omega)$ are treated.
  Moreover, our proof is simpler
  because \cite[Thm.\ 6.2.3]{Kurula_Zwart:2012}
  also shows that $G$ is uniquely determined by $D$.
  This is irrelevant, however: it suffices that
  $\bdD$ and $\bdN$ determine $G$ and (in turn) $D$ as given.
\end{remark}

Throughout the remainder of this \ifArxiv work\else contribution\fi,
we consider $G \dfn \HoneD$ and $D \dfn \HdivN$ with
\begin{equation*}
  B \dfn \Grad|_G \qtextq{and} B^* = -\div|_D,
\end{equation*}
including the extreme cases
of pure Dirichlet boundary data
and pure Neumann boundary data,
\begin{gather*}
  \Gamma = \bdD \implies G = \Honez \implies D = \Hdiv, \\
  \Gamma = \bdN \implies G = \Hone \implies D = \Hdivz.
\end{gather*}
We write $G$ and $D$ when regarding \ifArxiv them \else these spaces \fi
as sets or as dense subspaces of $\LtwoR$ or $\LtwoS$, respectively,
and we write $\Honez, \HoneD, \Hone$ and $\Hdivz, \HdivN, \Hdiv$
when regarding them as Hilbert spaces with the respective norms
$\norm[\Hone]{}$ and $\norm[\Hdiv]{}$.
(As an exception, the \ifArxiv orthogonal \fi decompositions
of \cref{lem:ext-D,lem:ext-N} will be denoted as
$\Hone = G \oplus G^\perp$ and $\Hdiv = D \oplus D^\perp$.)
In all cases we have an orthogonal decomposition of $\LtwoS$,
for which we need the following definition and lemma;
see also \cite[Lem.~3.2]{Lewicka:2023}.

\begin{definition}
  The space $R$ of infinitesimal rigid body motions in $\Rd$ is
  \begin{equation*}
    R \dfn \defset{u \in \LtwoR}{\Grad u = 0}
    =
    \defset{(x \mapsto A x + b)}{(A, b) \in \Ad \times \Rd}.
  \end{equation*}
  Its orthogonal complement in $\LtwoR$ will be denoted as $Q$.
\end{definition}

\begin{remark}
  Clearly, $R$ is isomorphic to the tangent space $T(SO(d) \times \Rd)$
  of dimension $\frac12 d (d + 1)$,
  we have $R \subset C^\oo(\Omega, \Rd)$,
  and both $R$ and $Q$ are closed.
  The tangent property of $R$ explains why the symmetric gradient
  generates stresses that are independent of rigid rotations and translations.
  For Korn's inequality (next lemma) and
  later on for pure Neumann boundary data,
  restricting displacements to $Q$ is essential.
\end{remark}

\begin{lemma}
  \label{lem:P+K}
  \LetO
  Then, the inequalities of Poincaré (with $C_1 > 0$) and Korn
  (with $C_2 > 0$) hold for every $u \in \Hone \cap Q\:$
  \begin{equation*}
    \norm[\LtwoR]{u}^2
    \le C_1 \norm[\LtwoM]{\grad u}^2
    \le C_1 C_2 \norm[\LtwoS]{\Grad u}^2.
  \end{equation*}
\end{lemma}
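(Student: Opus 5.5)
The plan is to prove the two inequalities separately, each by the standard compactness (contradiction) argument based on the Rellich--Kondrachov theorem: the embedding $\Hone \hookrightarrow \LtwoR$ is compact for a bounded Lipschitz domain. For the second inequality we combine this with the first Korn inequality of \cref{lem:inequalities}~(i). The only structural input from the present setting is that $Q = R^\perp$ is closed. Also, since $R$ contains the constant fields $x \mapsto b$, every $u \in Q$ has vanishing mean: $\int_\Omega u = 0$.

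\emph{Poincaré.} Suppose the first inequality fails. Then there are $u_n \in \Hone \cap Q$ with $\norm[\LtwoR]{u_n} = 1$ and $\norm[\LtwoM]{\grad u_n} \to 0$. The sequence is bounded in $\Hone$, so by Rellich a subsequence converges in $\LtwoR$ to some $u$. Since $\grad u_n \to 0$ in $\LtwoM$ and $\grad$ is closed (Remark~\labelcref{rem:closed-op}), we get $u \in \Hone$ with $\grad u = 0$. As $\Omega$ is connected, $u$ is constant. Moreover $u \in Q$, because $Q$ is closed in $\LtwoR$, so $u$ has zero mean and therefore $u = 0$. This contradicts $\norm[\LtwoR]{u} = \lim_n \norm[\LtwoR]{u_n} = 1$.

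\emph{Korn.} Suppose the second inequality fails. Then there are $u_n \in \Hone \cap Q$ with $\norm[\LtwoM]{\grad u_n} = 1$ and $\norm[\LtwoS]{\Grad u_n} \to 0$. By the Poincaré part, $\norm[\LtwoR]{u_n}^2 \le C_1$, so $(u_n)$ is bounded in $\Hone$. By Rellich, a subsequence (not relabeled) is Cauchy in $\LtwoR$. Applying \cref{lem:inequalities}~(i) to $u_n - u_m$ gives
\begin{equation*}
  \norm[\LtwoM]{\grad(u_n - u_m)}^2
  \le C \bigl( \norm[\LtwoR]{u_n - u_m}^2 + \norm[\LtwoS]{\Grad(u_n - u_m)}^2 \bigr),
\end{equation*}
and the right-hand side tends to zero. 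Hence $(u_n)$ is Cauchy in $\Hone$ and converges there to some $u$. By continuity, $\Grad u = 0$, so $u \in R$; since $Q$ is closed, also $u \in Q$. Thus $u \in R \cap Q = \{0\}$. This contradicts $\norm[\LtwoM]{\grad u} = \lim_n \norm[\LtwoM]{\grad u_n} = 1$.

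The main obstacle is the Korn step. A weakly convergent subsequence alone does not control $\norm[\LtwoM]{\grad u}$, so one needs genuine strong convergence in $\Hone$. This is exactly what the first Korn inequality provides when applied to differences, and it is why \cref{lem:inequalities}~(i) is used rather than reproved. Combining the two parts gives the stated chain with constants $C_1$ and $C_2$.
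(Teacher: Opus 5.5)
Your proof is correct. Both inequalities follow from the standard contradiction argument with Rellich compactness: in the Korn step, the first Korn inequality of \cref{lem:inequalities}~(i), applied to the differences $u_n - u_m$, upgrades $L^2$-convergence to $H^1$-convergence, and the closedness of $Q$ together with zero mean (for Poincar\'e), respectively $R \cap Q = \{0\}$ (for Korn), rules out a nonzero limit. This appears to be essentially the argument of the cited steps~1 and~2 in the proof of Lewicka's Thm.~3.8; the paper only cites that reference and does not write the argument out, so you have supplied it in self-contained form.
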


\begin{proof}
  See steps 1.\ and 2.\ in the proof of \cite[Thm.~3.8]{Lewicka:2023}.
\end{proof}

\begin{lemma}[Helmholtz decomposition]
  \label{lem:M+N=L2}
  \LetODN
  Let
  \begin{equation*}
    M \dfn \defset{\Grad u}{u \in G} = \im B
  \end{equation*}
  and
  \begin{equation*}
    N \dfn \defset{s \in D}{\div s = 0} = \ker B^*.
  \end{equation*}
  Then, $M$ and $N$ are closed in $\LtwoS$, $M \oplus N = \LtwoS$,
  and the decomposition is orthogonal.
\end{lemma}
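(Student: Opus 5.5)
The plan is to use the closed-range theorem for the closed, densely defined operator $B = \Grad|_G$ together with its adjoint $B^* = -\div|_D$ from \cref{thm:div-grad-duality}. The standard Hilbert-space identity for a closed densely defined operator $T$ is $\ker T^* = (\im T)^\perp$. Hence
\begin{equation*}
  N = \ker B^* = (\im B)^\perp = M^\perp,
\end{equation*}
and this gives $\LtwoS = \cl M \oplus N$ orthogonally. Here $N$ is closed, being an orthogonal complement; this also follows since $B^*$ is closed. So everything reduces to showing that $M = \im B$ is closed in $\LtwoS$.

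To prove closedness of $M$, I will derive a lower bound for $B$ on a complement of its kernel. The kernel is $\ker B = G \cap R$: it equals $R$ when $G = \Hone$ (pure Neumann), and it is a subspace of $R$ in general. In the pure Neumann case I restrict to $u \in \Hone \cap Q$. Since $R \subset \Hone$ (the elements of $R$ are smooth), every $u \in \Hone$ splits as $u = r + q$ with $r \in R$, $q \in \Hone \cap Q$, and $\Grad u = \Grad q$. Thus $M = \defset{\Grad q}{q \in \Hone \cap Q}$. By \cref{lem:P+K},
\begin{equation*}
  \norm[\LtwoR]{q} + \norm[\LtwoM]{\grad q} \le C \norm[\LtwoS]{\Grad q}.
\end{equation*}
Now let $\Grad q_n \to e$ in $\LtwoS$. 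Then $(q_n)$ is Cauchy in $\Hone$ and converges to some $q \in \Hone \cap Q$ (this set is closed in $\Hone$, since $Q$ is closed in $\LtwoR$). By continuity, $\Grad q = e$, so $M$ is closed.

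For general $G = \HoneD$, I have two options. The first is to apply the same decomposition inside $G$. The catch is that $R \not\subset G$ when $\bdD \neq \0$, so I would instead take the orthogonal complement of $\ker B = G \cap R$ within $G$ with respect to the $\LtwoR$ product, and prove a Korn-type inequality on it. The simpler route, which I will use, is the following. When $\bdD$ has positive measure, $G \cap R = \{0\}$, and the estimate $\norm[\Hone]{u} \le C \norm[\LtwoS]{\Grad u}$ on $G$ follows by a compactness (Peetre–Tartar) argument. This argument combines the first Korn inequality, \cref{lem:inequalities} (i), with Rellich compactness $\Hone \hookrightarrow \LtwoR$: a normalized sequence violating the estimate would converge to a nonzero element of $G \cap R$, which is impossible. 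The same argument covers the general case if I work on the $\LtwoR$-orthogonal complement of the finite-dimensional space $G \cap R$ in $G$. After that, closedness follows exactly as above, using that $G$ is closed in $\Hone$.

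The main obstacle is this Korn-type coercivity on $G$ modulo its rigid part, since \cref{lem:P+K} is stated only on $Q$. For the contradiction argument, take $u_n$ in the complement with $\norm[\Hone]{u_n} = 1$ and $\Grad u_n \to 0$. Rellich gives an $\LtwoR$-convergent subsequence, and the first Korn inequality makes it Cauchy in $\Hone$. Its limit $u$ then has $\norm[\Hone]{u} = 1$, lies in the closed set $G \cap (G\cap R)^\perp$, and satisfies $\Grad u = 0$, so $u \in G \cap R$. Being also orthogonal to $G \cap R$, it is $0$, which contradicts $\norm[\Hone]{u} = 1$.
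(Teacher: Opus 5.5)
Your proof is correct and has the same skeleton as the paper's. Both reduce the lemma to closedness of $M = \im B$ via $\ker B^* = (\im B)^\perp$, and both obtain closedness from a Korn--Poincar\'e coercivity estimate.

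The passage to the limit differs, but only cosmetically. The paper takes preimages $u_n \in G \cap Q$ of $s_n$ and bounds them in $\Hone$ by Lemma~\ref{lem:P+K}. It then extracts a weakly convergent subsequence and identifies the limit by testing against symmetric test fields. You instead use the estimate as a linear lower bound, so the preimages are Cauchy in $\Hone$ and continuity of $\Grad$ identifies the limit. Either way works.

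The substantive difference is the case $\bdD \neq \emptyset$. There $R \not\subset G$, and the preimage in $G$ is unique because $\ker B = \{0\}$. Moreover $G \not\subset Q$: for $\Omega = (0,L)$ and $\bdD = \{0\}$, the field $u(x) = x$ is the only element of $G$ with $\Grad u = 1$, yet it has nonzero mean. So the paper's choice $u_n \in G \cap Q$ is not available as written. Its argument tacitly needs the Korn inequality $\lVert u \rVert_{H^1} \le C \lVert \Grad u \rVert$ on $\HoneD$, which the paper only cites elsewhere (Lewicka, Cor.~3.10).

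Your Peetre--Tartar argument on $G \cap (G \cap R)^\perp$ proves exactly this estimate, using the first Korn inequality and Rellich compactness. It treats all boundary partitions uniformly and closes that gap. The price is one extra standard tool, compactness of $\Hone \hookrightarrow \LtwoR$, whereas the paper's version stays within its listed lemmas plus an external citation.
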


\begin{proof}
  This follows from $(\im B)^\perp = \ker B^*$ since $M = \im B$ is closed,
  as we now prove.
  Let $(s_n)_n \subset M$ converge to $s$ in $\LtwoS$.
  For each $n$, let $s_n = \Grad u_n$ for some $u_n \in G \cap Q$.
  Then, $(u_n)_n$ is bounded in $G = \HoneD$ by \cref{lem:P+K}
  because $\Grad u_n = s_n$ is bounded in $\LtwoS$.
  Hence, there exists $u \in G$ such that $u_{n_k} \rightharpoonup u$
  weakly in $\HoneD$ and consequently in $\LtwoR$ for a subsequence.
  Thus, every test function $t \in \test[\Sd]$ satisfies
  \begin{align*}
    \iprod[\LtwoS]{s}{t}
    &= \lim\iprod[\LtwoS]{s_{n_k}}{t} = \lim\iprod[\LtwoS]{\Grad u_{n_k}}{t} \\
    &= \lim\iprod[\LtwoR]{u_{n_k}}{-\div t}
      = \iprod[\LtwoR]{u}{-\div t} = \iprod[\LtwoS]{\Grad u}{t},
  \end{align*}
  which gives $s = \Grad u \in M$.
\end{proof}

\begin{remark}
  A direct proof (without using adjoint\ifArxiv\ operator\fi s)
  is sketched in \cite[Lem.~2.1]{Conti2018} for the case $\bdD \ne \0$.
  Decompositions as in \cref{lem:M+N=L2} can also exist
  for unbounded $\Omega$ and in $L^r$ with $1 < r < \oo$;
  see \Cite{Hieber_et_al:2021} and references therein.
\end{remark}

\section{Data-Driven Elasticity Problems}
\label{sec:dd-problems}

Let $\data$ be a given data set of strain-stress pairs
that is \emph{closed} in $\Sd \times \Sd$.
To formulate the basic data-driven problems,
let $Z \dfn \LtwoS \times \LtwoS$ and
construct auxiliary fields $\ed, \sd$ from $\data$ as
\begin{equation*}
  \setD_d \dfn \defset{(\ed, \sd) \in Z}
  {(\ed(x), \sd(x)) \in \data \tfor x \in \Omega}.
\end{equation*}
Consider also the special case where $\data$ is defined
by some constitutive relationship $\cg\: \Sd \times \Sd \to \Sd$,
\begin{equation*}
  \setD_c \dfn \defset{(\ed, \sd) \in Z}
  {\cg(\ed(x), \sd(x)) = 0 \tfor x \in \Omega}.
\end{equation*}
For small deformations,
the standard elasticity problem in Hilbert space
is a PDE boundary value problem
wherein the strain $e$ and the stress $s$ are coupled
by such a material law $\cg(e, s) = 0$,
leading to a (generally nonlinear) elliptic problem.

\begin{problem}[\BPM0]
  Let $\XM0 \dfn \Hone \times \LtwoS \times \Hdiv$.
  Given a force field $f \in \LtwoR$ and
  boundary data $g \in \Hphalf$, $h \in \Hnhalf$,
  consider for $x = (u, e, s)$ in $\XM0$ the problem
  \begin{align*}
    e - \Grad u &= 0, \quad T_0 u = g \on \bdD, \\
    \div s + f &= 0, \quad T_\nu s = h \on \bdN, \\
    \cg(e, s) &= 0.
  \end{align*}
\end{problem}

\begin{remark}
  If there exists a hemicontinuous map $\cG\: \LtwoS \to \LtwoS$
  that satisfies $\cg(e, \cG(e)) = 0$
  \ifArxiv
  and that is monotone and coercive in the sense that
  $\iprod[\LtwoS]{\cG(e_2) - \cG(e_1)}{e_2 - e_1} \ge 0$ respectively
  $\iprod[\LtwoS]{\cG(e)}{e} \ge C \norm[\LtwoS]{e}^2$ with $C > 0$,
  \else
  and that is monotone,
  $\iprod[\LtwoS]{\cG(e_2) - \cG(e_1)}{e_2 - e_1} \ge 0$,
  and coercive,
  $\iprod[\LtwoS]{\cG(e)}{e} \ge C \norm[\LtwoS]{e}^2$ with $C > 0$,
  \fi
  then the Minty--Browder theorem \cite{Browder:1967}
  guarantees that \BPM0 admits a solution.
  That solution is unique if $\cG$ is strictly monotone.
\end{remark}

\begin{remark}
  Note that we require $g$ and $h$
  to be defined on the entire boundary
  to avoid technical issues with the regularity of
  $\bd\bdD \cap \bd\bdN \subseteq \Gamma \setminus (\bdD \cup \bdN)$.
  While locally defined boundary data $h_N \in \Hnhalf[N]$
  can simply be extended by zero to obtain $h \in \Hnhalf$,
  $g_D \in \Hphalf[D]$ may not admit an extension to $\Hphalf$
  for certain exotic geometries;
  see also the discussion in \cite[Rem.\ 3.1]{Gudoshnikow_Krizk:2025}.
\end{remark}

Data-driven problems, cf.\
\Cites{Kirchdoerfer2016,Kirchdoerfer2018,Conti2018,Conti2020,%
  Gebhardt_Lange_Steinbach:2025,Gebhardt_Steinbach:2025},
relax the hard coupling $\cg(e, s) = 0$ by minimizing a suitable distance
of $(e, s)$ to $\setD_d$ (or $\setD_c$), here the $L^2$ distance
\begin{equation*}
  \obj(e - \ed, s - \sd)
  \dfn
  \frac{c}{2} \norm[\LtwoS]{e - \ed}^2 +
  \frac{1}{2 c} \norm[\LtwoS]{s - \sd}^2,
\end{equation*}
where $c$ is a positive constant that provides unit consistency.
Note that the formally more general distance
$\norm[\C]{e - \ed}^2 + \norm[\Inv\C]{s - \sd}^2$
with a rank-4 tensor $\C$ that maps $\LtwoS$ to itself
would not introduce additional complexity.
Note further that physical units of $u$ and $s$
in $\norm[\Hone]{u}$ and $\norm[\Hdiv]{s}$
also necessitate a suitable scaling to ensure unit consistency.
This has no impact on our analysis, however,
as it produces equivalent norms.
We begin the analysis with the data-driven counterpart of \BPM0.

\begin{problem}[\BPM1]
  Let $\XM1 \dfn \XM0$.
  Given $f \in \LtwoR$, $g \in \Hphalf$ and $h \in \Hnhalf$,
  consider for $x = (u, e, s)$ in $\XM1$ and $z = (\ed, \sd)$ in $Z$
  the optimization problem
  \begin{align*}
    \smash[b]{\inf_{(x, z) \in \XM1 \times Z} \ \obj(e - \ed, s - \sd)}
    \qstq
    e - \Grad u &= 0, \quad T_0 u = g \on \bdD, \\
    \div s + f &= 0, \quad T_\nu s = h \on \bdN, \\
    (\ed, \sd) &\in \setD,
  \end{align*}
  where $\setD$ is the set of auxiliary fields,
  either $\setD_c$ or $\setD_d$.
  (Do not confuse $\setD$ with \ifArxiv the test functions $\mathscr D$\else
  $\mathscr D$, the space of test functions\fi).
\end{problem}

For a more concise formulation,
we now express the solution fields as orthogonal sums
$u = u_0 + \extD g$ and $s = s_0 + \extN h$ with unknowns
$u_0 \in G$ and $s_0 \in D$, respectively,
while the orthogonal components
$\extD g \in G^\perp$ and $\extN h \in D^\perp$
are uniquely given by the boundary data
due to \cref{lem:ext-D,lem:ext-N}.
The associated set of shifted auxiliary fields is
$\setD_0 \dfn \defset{(\ed, \sd_0)}
{(\ed, \sd_0 + \extN h) \in \setD}$
with $\setD = \setD_c$ or $\setD = \setD_d$.
Recall that we have $B = \Grad|_G$ and $B^* = -\div|_D$
with $G = \HoneD$ and $D = \HdivN$.

\begin{problem}[\BPM2] \label{labelBPM2}
  Let $\XM2 \dfn G \times \LtwoS \times D$.
  Given fields $f \in \LtwoR$, $g \in \Hphalf$ and $h \in \Hnhalf$,
  let $p \dfn \Grad \extD g$ and $q \dfn f + \div \extN h$,
  and consider for
  $x = (u_0, e, s_0)$ in $\XM2$ and $z = (\ed, \sd_0)$ in $Z$
  the optimization problem
  \begin{align*}
    \smash[b]{\inf_{(x, z) \in \XM2 \times Z} \ \obj(e - \ed, s_0 - \sd_0)}
    \qstq
    e - B u_0 - p &= 0 \in \LtwoS, \\
    B^* s_0 - q &= 0 \in \LtwoR, \\
    (\ed, \sd_0) &\in \setD_0.
  \end{align*}
\end{problem}

\begin{proposition}
  Problems \BPM1 and \BPM2 are equivalent.
\end{proposition}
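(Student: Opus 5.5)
I would prove the equivalence by an explicit bijection between the feasible sets of \BPM1 and \BPM2 that preserves the objective value. Define
\begin{equation*}
  \Phi\: (u_0, e, s_0, \ed, \sd_0) \mapsto (u_0 + \extD g,\ e,\ s_0 + \extN h,\ \ed,\ \sd_0 + \extN h).
\end{equation*}
Since $\extD g$ and $\extN h$ are fixed elements given by \cref{lem:ext-D,lem:ext-N}, $\Phi$ is an affine bijection from $\XM2 \times Z$ onto its image in $\XM1 \times Z$. Its inverse is $u_0 = u - \extD g$, $s_0 = s - \extN h$, $\sd_0 = \sd - \extN h$. The objective is invariant under $\Phi$, because
\begin{equation*}
  s - \sd = (s_0 + \extN h) - (\sd_0 + \extN h) = s_0 - \sd_0,
\end{equation*}
and $e - \ed$ is unchanged. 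It therefore suffices to show that $\Phi$ maps the feasible set of \BPM2 exactly onto the feasible set of \BPM1. Then both infima coincide, and minimizers correspond to each other.

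\textbf{Boundary conditions.} For $u \in \Hone$, I claim that $T_0 u = g$ on $\bdD$ if and only if $u - \extD g \in G$. Since $T_0 \extD g = g$ on $\bdD$ by \cref{lem:ext-D}, we get $T_0(u - \extD g) = T_0 u - g$ on $\bdD$. This vanishes on $\bdD$ exactly when $u - \extD g \in \HoneD = G$. The same argument with \cref{lem:ext-N} shows that $T_\nu s = h$ on $\bdN$ if and only if $s - \extN h \in \HdivN = D$. Hence $u \in \Hone$ and $s \in \Hdiv$ satisfy the boundary conditions of \BPM1 exactly when $u_0 \in G$ and $s_0 \in D$. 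This matches the definition $\XM2 = G \times \LtwoS \times D$.

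\textbf{Field equations.} By linearity, $\Grad u = \Grad u_0 + \Grad \extD g = B u_0 + p$, using $B = \Grad|_G$. So $e - \Grad u = 0$ is equivalent to $e - B u_0 - p = 0$. Likewise,
\begin{equation*}
  \div s + f = \div s_0 + \div \extN h + f = -B^* s_0 + q,
\end{equation*}
using $B^* = -\div|_D$ from \cref{thm:div-grad-duality}. So $\div s + f = 0$ is equivalent to $B^* s_0 - q = 0$. Here $p \in \LtwoS$ because $\extD g \in \Hone$, and $q \in \LtwoR$ because $\extN h \in \Hdiv$, so all equations are posed in the stated spaces.

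\textbf{Data constraint.} By the definition of $\setD_0$, $(\ed, \sd_0) \in \setD_0$ if and only if $(\ed, \sd_0 + \extN h) \in \setD$, which is exactly the constraint on $(\ed, \sd)$ in \BPM1.

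The only step needing any care is the boundary-condition step. There one must interpret ``$T_0 u = g$ on $\bdD$'' and ``$T_\nu s = h$ on $\bdN$'' consistently with the definitions of $\HoneD$ and $\HdivN$. For the Neumann part, that means the weak sense of Remark~\ref{rem:Neumann-meaning}. Once that is fixed, linearity of the traces makes the equivalence immediate. Everything else is bookkeeping.
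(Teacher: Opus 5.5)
Your proof is correct and follows essentially the same route as the paper: shift $u$, $s$ and $\sd$ by the fixed fields $\extD g$ and $\extN h$, check that feasibility transfers in both directions, and note that $s - \sd = s_0 - \sd_0$, so the objective is unchanged. Your explicit argument that $T_0 u = g$ on $\bdD$ holds if and only if $u - \extD g \in G$ (and likewise $T_\nu s = h$ on $\bdN$ if and only if $s - \extN h \in D$) also spells out a step that the paper's converse direction uses without comment.
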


\begin{proof}
  Given $(u_0, e, s_0, \ed, \sd_0)$ in $\XM2 \times Z$,
  let $u \dfn u_0 + \extD g \ifArxiv$ in $\else\in\fi G \oplus G^\perp = \Hone$
  and $s \dfn s_0 + \extN h \ifArxiv$ in $\else\in\fi D \oplus D^\perp = \Hdiv$,
  similarly $\sd \dfn \sd_0 + \extN h \in \LtwoS$.
  Then, $(u, e, s, \ed, \sd)$ in $\XM1 \times Z$
  is feasible for \BPM1 if and only if
  $(u_0, e, s_0, \ed, \sd_0)$ is feasible for \BPM2.
  Indeed, the definition of $\setD_0$ gives
  $(\ed, \sd) \in \setD$ iff
  $(\ed, \sd_0) \in \setD_0$,
  and the definitions of $p, q, u, s$
  combined with \cref{lem:ext-D,lem:ext-N} give
  \ifArxiv
  \begin{align*}
    e - \Grad u &= e - \Grad (u_0 + \extD g) = e - B u_0 - p,
    &T_0 u|_{\bdD} = T_0 u_0|_{\bdD} + T_0 \extD g|_{\bdD} &= g|_{\bdD}, \\
    \div s + f &= \div (s_0 + \extN h) + f = -B^* s_0 + q,
    &T_\nu s|_{\bdN} = T_\nu s_0|_{\bdN} + T_\nu \extN h|_{\bdN} &= h|_{\bdN}.
  \end{align*}
  \else
  \begin{align*}
    e - \Grad u &= e - \Grad (u_0 + \extD g) = e - B u_0 - p, \\
    \div s + f &= \div (s_0 + \extN h) + f = -B^* s_0 + q, \\
    T_0 u|_{\bdD} &= T_0 u_0|_{\bdD} + T_0 \extD g|_{\bdD} = g|_{\bdD}, \\
    T_\nu s|_{\bdN} &= T_\nu s_0|_{\bdN} + T_\nu \extN h|_{\bdN} = h|_{\bdN}.
  \end{align*}
  \fi
  Conversely, given $(u, e, s, \ed, \sd)$ in $\XM1 \times Z$ solving \BPM1,
  let $u_0 \dfn u - \extD g$, $s_0 \dfn s - \extN h$
  and $\sd_0 \dfn \sd - \extN h$ to recover
  $(u_0, e, s_0, \ed, \sd_0)$ in $\XM2 \times Z$.
  Finally, the objective values coincide since $s - \sd = s_0 - \sd_0$.
\end{proof}

Without loss of generality, we set $h \dfn 0$
for pure Dirichlet boundary data from now on:
$D^\perp = \set{0}$ implies $\extN h = 0$ and $q = f$ for any $h$.
Similarly, we set $g \dfn 0$ for pure Neumann boundary data:
$G^\perp = \set{0}$ implies $\extD g = 0$ and $p = 0$ for any $g$.

\begin{proposition}
  \label{prop:inj-surj}
  In problem \BPM2 with $\bdD \ne \0$,
  $B$ is injective and $B^*$ is surjective onto $\LtwoR$.
  For $\bdD = \0$ (pure Neumann boundary data), the operator
  $B_N \dfn B|_{G \cap Q}\: G \cap Q \subset Q \to \LtwoS$
  is injective with $\im B_N = \im B = M$,
  and $B_N^*$ is surjective onto $Q$ with $\ker B_N^* = \ker B^* = N$.
\end{proposition}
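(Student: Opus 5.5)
The plan is to combine the Helmholtz decomposition of \cref{lem:M+N=L2} with the standard Hilbert-space facts for closed, densely defined operators: $(\im T)^\perp = \ker T^*$, and $\im T^*$ is closed if and only if $\im T$ is closed. We run the argument twice, once with $\bdD \ne \0$ and once for pure Neumann data.

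\emph{Case $\bdD \ne \0$.} For injectivity of $B$, take $u \in G$ with $\Grad u = 0$. Then $u \in R$, so $u(x) = Ax + b$ is affine and smooth up to the boundary, and $T_0 u = u|_\Gamma$ vanishes on the relatively open, nonempty set $\bdD$. We need that an infinitesimal rigid motion vanishing on a relatively open piece of a Lipschitz boundary vanishes identically. The zero set of $x \mapsto Ax + b$ is an affine subspace. If $A \ne 0$, then $A$ is skew and nonzero, hence has rank at least $2$, so this subspace has dimension at most $d-2$. A nonempty relatively open subset of a Lipschitz boundary is locally a Lipschitz graph over $d-1$ variables, so it is not contained in any affine subspace of dimension at most $d-2$. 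Hence $A = 0$, and then $b = 0$. This step (injectivity from a nonempty Dirichlet part) is the main obstacle. The affine argument above is what I would try first; as a fallback, one can use Korn with boundary conditions via the contradiction/compactness argument behind \cref{lem:P+K}.

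Once $B$ is injective, we upgrade to a lower bound: $\norm[\LtwoR]{u} \le C \norm[\LtwoS]{Bu}$ on $G$. By \cref{lem:M+N=L2}, $M = \im B$ is closed in $\LtwoS$, and $B$ is closed, so the inverse $B^{-1}\colon M \to G \subset \LtwoR$ is a closed operator defined on a Hilbert space. The closed graph theorem then gives boundedness, which is exactly the estimate. For surjectivity of $B^*$, the closed range theorem for closed densely defined operators applies: $\im B$ closed implies $\im B^*$ closed, and $\im B^* = (\ker B)^\perp = \LtwoR$. Alternatively, one can argue directly. Given $f \in \LtwoR$, the functional $Bu \mapsto \iprod[\LtwoR]{f}{u}$ is bounded on $M$ by the estimate, so Riesz gives $s \in M$ with $\iprod[\LtwoS]{s}{Bu} = \iprod[\LtwoR]{f}{u}$ for all $u \in G$. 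Hence $s \in \dom(B^*)$ and $B^* s = f$ by \cref{thm:div-grad-duality}.

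\emph{Case $\bdD = \0$.} Here $G = \Hone$ and $\ker B = R \subset G$, because $R$ consists of smooth functions. Every $u \in G$ splits as $u = P_R u + P_Q u$ with $P_R u \in R \subset G$, so $P_Q u \in G \cap Q$ and $B P_Q u = Bu$. This gives $\im B_N = \im B = M$, and injectivity of $B_N$ holds because $\ker B \cap Q = R \cap Q = \{0\}$. By \cref{lem:P+K}, $B_N$ is bounded below. Since $Q$ is closed, $G \cap Q$ is closed in $\Hone$, so $B_N$ is a closed operator by \cref{lem:closed-op}. It is densely defined in $Q$, since $P_Q$ maps the dense set $G$ onto $G \cap Q$.

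For the adjoint $B_N^*\colon \dom(B_N^*) \subset \LtwoS \to Q$, we identify its domain and action. Because $B$ and $B_N$ have the same range, a functional $u \mapsto \iprod{s}{Bu}$ is bounded on $G$ exactly when it is bounded on $G \cap Q$; this uses $\norm{P_Q u} \le \norm{u}$ in one direction and $B P_R u = 0$ in the other. Hence $\dom(B_N^*) = D$ and $B_N^* = P_Q B^* = B^*$ on $D$. Note that $\im B^* \perp R$ already, since $\iprod{B^* s}{r} = \iprod{s}{Br} = 0$ for $r \in R$. Consequently $\ker B_N^* = \ker B^* = N$. Surjectivity onto $Q$ then follows exactly as in the first case, via Riesz on $M$ using the lower bound for $B_N$.
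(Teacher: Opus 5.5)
Your proof is correct, and its skeleton matches the paper's: $\ker B = G \cap R$, closedness of $M = \im B$ from \cref{lem:M+N=L2}, the closed range theorem for the closed, densely defined operator $B$, and $\im B_N = \im B$ because $B$ annihilates $R \subset G$ when $\bdD = \0$. The differences are in how the individual steps are justified.

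For $G \cap R = \set{0}$ the paper simply cites \cite[Cor.~3.10]{Lewicka:2023}. You instead give a self-contained argument: a nonzero skew matrix has rank at least $2$, so the zero set of $x \mapsto Ax + b$ lies in an affine subspace of dimension at most $d-2$, which cannot contain a nonempty relatively open piece of a Lipschitz boundary. One detail to add: $T_0 u = 0$ on $\bdD$ holds only a.e. It upgrades to pointwise vanishing because $u|_\Gamma$ is continuous and nonempty relatively open subsets of $\Gamma$ have positive surface measure.

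In the Neumann case the paper again uses the closed range theorem, $\im B_N^* = (\ker B_N)^\perp = Q$. It leaves $\ker B_N^* = N$ implicit; this follows from $\ker B_N^* = (\im B_N)^\perp = M^\perp$. You instead identify $\dom(B_N^*) = D$ and $B_N^* = B^*$ explicitly, using $\im B^* \perp R$. You then construct preimages by Riesz on $M$ with the bound of \cref{lem:P+K}.

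The paper's version is shorter and treats both cases uniformly through the closed range theorem. Yours is more elementary and more explicit about the adjoint of the restricted operator. Its closed-graph step also yields, as a by-product, the estimate $\norm[\LtwoR]{u} \le C \norm[\LtwoS]{Bu}$ on all of $G$ for $\bdD \ne \0$, which is exactly the coercivity used later in \cref{prop:fixed-z}.
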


\begin{proof}
  By the definitions of $B$ and $R$, we have $\ker B = G \cap R$.
  Then, firstly, let $\bdD \ne \0$.
  In this case, $G \cap R = \set{0}$, \cite[Cor.\ 3.10]{Lewicka:2023},
  hence $B$ is injective.
  By \cref{lem:M+N=L2}, $\im B$ is closed;
  according to the Closed Range Theorem, so is $\im B^*$.
  Surjectivity of $B$ hence follows from
  $(\im B^*)^\perp = \ker B^{**} = \ker B = \set{0}$.
  Also from \cref{lem:M+N=L2}, we have $\im B = M$, $\ker B^* = N$.
  If $\bdD = \0$, $\ker B = G \cap R = R$
  and hence $\ker B_N = \set{0}$.
  Now $B_N$---being a restriction of a closed operator to a closed subspace---%
  is again closed, and since $B = 0$ on $G \cap Q^\perp = G \cap R$,
  $\im B_N = \im B$.
  Hence, again, $\im B_N$ and $\im B_N^*$ are closed and
  $\im B_N^* = \cl{\im B_N^*} = (\ker B_N)^\perp = \set{0}^\perp = Q$.
\end{proof}

\begin{remark}
  Note that in \cref{prop:inj-surj}, we need to regard $B_N$ as operator
  $B_N\: G \cap Q \subset Q \to \LtwoS$ and not
  $B_N\: G \cap Q \subset \LtwoR \to \LtwoS$.
  This is necessary for $B_N$ to still be densely defined
  and hence for its adjoint to be well-defined.
\end{remark}

Because of \cref{prop:inj-surj},
\BPM2 with pure Neumann boundary data has inconsistent constraints unless
$f$ and $h$ obey a compatibility condition
related to the space $R^\perp = Q$.
In specific, with $G_Q \dfn G \cap Q$, \BPM2 specializes as follows,
where $G = \Hone$ and $D = \Hdivz$ with $g = 0$ and $p = 0$.

\begin{problem}[\BPN2]
  Let $\XN2 \dfn G_Q \times \LtwoS \times D$.
  Given $f \in \LtwoR$ and $h \in \Hnhalf$,
  let $q \dfn f + \div \extN h$ and assume
  that the compatibility condition $q \in  Q$ holds.
  Consider for $x = (u, e, s_0)$ in $\XN2$ and $z = (\ed, \sd_0)$ in $Z$
  the optimization problem
  \begin{align*}
    \smash[b]{\inf_{(x, z) \in \XN2 \times Z} \ \obj(e - \ed, s_0 - \sd_0)}
    \qstq
    e - B_N u &= 0 \in \LtwoS, \\
    B_N^* s_0 - q &= 0 \in Q, \\
    (\ed, \sd_0) &\in \setD_0.
  \end{align*}
\end{problem}

\begin{proposition}
  \label{prop:low-reg-B-B*}
  For $\bdN = \0$ (pure Dirichlet boundary data),
  we have bounded linear operators
  \begin{align*}
    B_D \dfn \Grad\: \Honez &\to \LtwoS,
    &B_D^* = -\div\: \LtwoS & \to \Honez^*,
  \end{align*}
  where \ifArxiv only \fi $\LtwoS$ is identified
  with its dual \ifArxiv space \fi but not $\Honez$,
  and $-B_D^*$ is the distributional divergence, defined via
  $\sprod[\Honez^*,\Honez]{B_D^* s}{u} \dfn \iprod[\LtwoS]{s}{B_D u}$.
  Again, $B_D$ is injective with $\im B_D = \im B = M$ and the
  adjoint $B_D^*$ is surjective with $\ker B_D^* = \ker B^* = N$.
\end{proposition}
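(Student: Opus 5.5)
Boundedness of $B_D$ is immediate from \cref{lem:inequalities}~(ii): $\norm[\LtwoS]{\Grad u} \le \norm[\LtwoM]{\grad u} \le \norm[\Hone]{u}$. Because $B_D$ is bounded between Hilbert spaces, its Banach-space adjoint $B_D^*\colon \LtwoS^* \to \Honez^*$ exists and is bounded with the same norm. Identifying $\LtwoS$ with its dual via Riesz (but not $\Honez$), this adjoint is exactly the map defined by $\sprod[\Honez^*,\Honez]{B_D^* s}{u} = \iprod[\LtwoS]{s}{B_D u}$. To see that $-B_D^*$ is the distributional divergence, I test with $u = v \in \test$ and use symmetry of $s$: $\iprod[\LtwoS]{s}{\Grad v} = \sprod[\dist,\test]{s}{\grad v} = -\sprod{\div s}{v}$. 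Since $\test$ is dense in $\Honez$, the functional $B_D^* s$ is the unique continuous extension of $\div s$ (up to sign) from test functions to $\Honez$.

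For injectivity and the range, note that for $\bdN = \0$ we have $G = \Honez$, so $B_D$ and $B = \Grad|_G$ agree as maps on the same set. Hence $\ker B_D = \ker B = \{0\}$ by \cref{prop:inj-surj}, and $\im B_D = \im B = M$. The kernel identity is also direct. We have $B_D^* s = 0$ iff $\iprod[\LtwoS]{s}{B u} = 0$ for all $u \in G$, i.e.\ iff $s \in (\im B)^\perp$. By \cref{lem:M+N=L2}, $(\im B)^\perp = \ker B^* = N$. (Equivalently, $\div s = 0$ distributionally gives $s \in \Hdiv = D$ with $\div s = 0$.)

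The main step is surjectivity of $B_D^*$ onto $\Honez^*$. I would use the Banach closed range theorem: $\im B_D^*$ is closed because $\im B_D = M$ is closed (\cref{lem:M+N=L2}). Then $\im B_D^* = (\ker B_D)^\perp$ (annihilator in $\Honez^*$), which equals $\{0\}^\perp = \Honez^*$.

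A more constructive alternative uses Korn and Poincaré (\cref{lem:P+K}), which apply on $\Honez$ since $\Honez \cap R = \{0\}$. These show that $B_D$ is bounded below, $\norm[\Hone]{u} \le C \norm[\LtwoS]{B_D u}$, so $B_D$ is an isomorphism onto $M$. Given $\ell \in \Honez^*$, define the functional $\ell \circ B_D^{-1}$ on $M$, extend it by zero on $N$, and represent it by Riesz as some $s \in M \subset \LtwoS$. Then $B_D^* s = \ell$. This also identifies $B_D^{*}|_M$ as the inverse of the adjoint isomorphism.

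The only subtle point is the bounded-below estimate in the case $\Omega$ has nonempty Dirichlet boundary. It follows from \cref{lem:P+K} combined with $\Honez \cap R = \{0\}$ via the standard compactness argument, or equivalently from the closedness of $M$ and injectivity via the open mapping theorem. I would cite the latter to avoid redoing Korn.
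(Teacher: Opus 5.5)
Your proof is correct, and it spells out what the paper dismisses as ``clear from the definitions'': on $G = \Honez$ the map $B_D$ coincides with $B$, so injectivity and $\im B_D = M$ come from \cref{prop:inj-surj,lem:M+N=L2}, $\ker B_D^* = M^\perp = N$, and surjectivity of $B_D^*$ follows from the Banach closed range theorem because $M$ is closed and $\ker B_D = \{0\}$. In the constructive variant, you are right to take the lower bound from the open mapping theorem (or from your compactness argument) rather than from \cref{lem:P+K} directly: that lemma is stated only on $\Hone \cap Q$, and $\Honez \not\subset Q$.
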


\begin{proof}
  Clear from the definitions.
\end{proof}

Due to \cref{prop:low-reg-B-B*}, we have an additional low-regularity
version of \BPM2 for pure Dirichlet boundary data,
with \ifArxiv body \fi force $f \in \Honez^*$ and stress $s \in \LtwoS$.
This \ifArxiv is the setting that \else setting \fi
was studied in \Cite{Gebhardt_Steinbach:2025}
for the case $d = 1$ using the Gelfand triple
$H_0^1(\Omega) \subset \Ltwo \subset H_0^1(\Omega)^*$.
Independent of the regularity, this problem has Neumann boundary data $h = 0$
and lacks the shifts of $s, \sd, \setD$ since $q = f + \div \extN h = f$.

\begin{problem}[\BPD2]
  Let $\XD2 \dfn \Honez \times \LtwoS \times \LtwoS$.
  Given fields $f \in \Honez^*$ and $g \in \Hphalf$,
  let $p \dfn \Grad \extD g$ and consider for
  $x = (u_0, e, s)$ in $\XD2$ and $z = (\ed, \sd)$ in $Z$
  the optimization problem
  \begin{align*}
    \smash[b]{\inf_{(x, z) \in \XD2 \times Z} \ \obj(e - \ed, s - \sd)}
    \qstq
    e - B_D u_0 - p &= 0 \in \LtwoS, \\
    B_D^* s - f &= 0 \in \Honez^*, \\
    (\ed, \sd) &\in \setD.
  \end{align*}
\end{problem}

\begin{remark}
  The reformulation \BPM2 of \BPM1 along with its special variants
  \BPN2 and \BPD2 can similarly be applied
  to the standard elasticity problem \BPM0.
  In contrast, the following material is generally specific
  to the data-driven problems
  and applies to \BPM0 only if
  \BPM1 admits a \emph{strong solution},
  i.e., $(e, s) = (\ed, \sd) \in \setD$ with $\inf\obj = \min\obj = 0$,
  so that \BPM0 and \BPM1 are equivalent.
\end{remark}
In the further analysis, we fix the auxiliary fields $\ed, \sd_0$
and solve the resulting continuous subproblems.
To this end, consider the Lagrangian of \BPM2 (with $\ed, \sd_0$ fixed),
\ifArxiv
\begin{equation*}
  L(u_0, e, s_0, \lambda, \mu) =
  \obj(e - \ed, s_0 - \sd_0)
    + \frac1c \iprod[\LtwoR]{\lambda}{B^* s_0 - q}
    + c \iprod[\LtwoS]{\mu}{e - B u_0 - p}
    ,
\end{equation*}
\else
\begin{multline*}
  L(u_0, e, s_0, \lambda, \mu) = {} \\
  \obj(e - \ed, s_0 - \sd_0)
    + \smash[t]{\frac1c} \iprod[\LtwoR]{\lambda}{B^* s_0 - q}
    + c \iprod[\LtwoS]{\mu}{e - B u_0 - p}
    ,
\end{multline*}
\fi
wherein the dual fields (Lagrange multipliers) $\lambda$ and $\mu$
are scaled to match corresponding objective terms.
We begin with Dirichlet or mixed boundary data.

\begin{theorem}
  \label{prop:fixed-z}
  Let $\bdD \ne \0$.
  Then, fixing any pair $z = (\ed, \sd_0) \in \setD_0$ in \BPM2
  yields a unique minimizer $x^* = (u_0^*, e^*, s_0^*)$ in $\XM2$
  and unique dual fields $\lambda^*$ in $G$
  and $\mu^*$ in $N \subset D$,
  all of which depend linearly on $z$ and $f, g, h$.
  Explicit expressions are given in \cref{eq:BPM2-x,eq:BPM2-y} in the proof.
\end{theorem}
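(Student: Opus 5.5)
The plan is to exploit that, once $z=(\ed,\sd_0)$ is fixed, \BPM2 splits into two independent convex projection problems in $\LtwoS$: one in $(u_0,e)$ and one in $s_0$. The objective is $\obj(e-\ed,s_0-\sd_0)$. The constraint $e=Bu_0+p$ involves only $(u_0,e)$, and the constraint $B^*s_0=q$ involves only $s_0$. Throughout I use the Helmholtz decomposition $\LtwoS=M\oplus N$ of \cref{lem:M+N=L2} with orthogonal projections $P_M,P_N$, and the facts from \cref{prop:inj-surj} for $\bdD\ne\0$: $B$ is injective, $\im B=M$ is closed, and $B^*$ is surjective onto $\LtwoR$ with $\ker B^*=N$.

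\emph{Strain part.} Substituting $e=Bu_0+p$, we must minimize $\frac c2\norm[\LtwoS]{Bu_0-(\ed-p)}^2$ over $u_0\in G$. Regard $B$ as the bounded operator $\HoneD\to\LtwoS$. It is injective with closed range $M$, so \cref{lem:psi} gives a bounded left-inverse $B^+$ with $BB^+=P_M$. The minimizer is unique because $B$ is injective, and it is
\begin{equation*}
  u_0^*=B^+(\ed-p),\qquad e^*=P_M(\ed-p)+p .
\end{equation*}

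\emph{Stress part.} The feasible set $\defset{s_0\in D}{B^*s_0=q}$ is a closed affine subspace of $\LtwoS$ (closed because $N=\ker B^*$ is closed by \cref{lem:M+N=L2}). It is nonempty: take $s_q\dfn(B^*)^+q$, using \cref{lem:psi} for the bounded surjection $B^*\:D\to\LtwoR$ with the $\Hdiv$ norm. The feasible set then equals $s_q+N$. Minimizing $\norm{s_0-\sd_0}^2$ over $s_q+N$ is an orthogonal projection in $\LtwoS$, so it has a unique solution
\begin{equation*}
  s_0^*=s_q+P_N(\sd_0-s_q)=P_Ms_q+P_N\sd_0 .
\end{equation*}
I will check two points: $P_Ms_q=s_q-P_Ns_q\in D$ because $N\subset D$, and $P_Ms_q$ does not depend on the choice of particular solution, since any two differ by an element of $N$. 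Linearity in $(z,f,g,h)$ follows because $p$, $q$, $B^+$, $(B^*)^+$, $P_M$ and $P_N$ are all linear. These formulas are \cref{eq:BPM2-x}.

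\emph{Dual fields.} I would write the stationarity conditions of $L$ and solve them explicitly instead of invoking an abstract multiplier theorem; the latter is delicate here because $B$ and $B^*$ are unbounded on $\LtwoR$ and $\LtwoS$.
\begin{itemize}
\item Varying $e$ gives $c(e^*-\ed)+c\mu=0$, hence $\mu^*=\ed-e^*=P_N(\ed-p)$, which lies in $N\subset D$.
\item Varying $u_0\in G$ gives $\iprod[\LtwoS]{\mu}{Bv}=0$ for all $v\in G$, that is $\mu\in M^\perp=N$. This is consistent with the previous step.
\item Varying $s_0\in D$ gives $\iprod{s_0^*-\sd_0}{t}+\iprod[\LtwoR]{\lambda}{B^*t}=0$ for all $t\in D$. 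By \cref{thm:div-grad-duality}, $\lambda\in\dom(B^{**})=G$ and $B\lambda=\sd_0-s_0^*=P_M(\sd_0-s_q)\in M$.
\end{itemize}
Injectivity of $B$ then yields the unique $\lambda^*=B^+(\sd_0-s_q)$. These are \cref{eq:BPM2-y}.

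The main obstacle is this last step: showing rigorously that $\lambda$ must lie in $G$, via the adjoint identity $B^{**}=B$ (closedness from \cref{rem:closed-op}), and that the multipliers are unique. Uniqueness of $\mu$ is immediate from the $e$-equation. Uniqueness of $\lambda$ rests entirely on the injectivity of $B$, which is exactly where $\bdD\ne\0$ enters.
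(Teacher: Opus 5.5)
Your proof is correct, but it is organized in the opposite order from the paper's. The paper puts the KKT system first. It gets existence and uniqueness of the primal minimizer abstractly, from surjectivity of the constraint map plus coercivity of $\obj$ on the constraint kernel. It then obtains multipliers $(\lambda,\mu)\in\LtwoR\times\LtwoS$ from a general Lagrange multiplier theorem (Maurer--Zowe), and only afterwards reads the structure off the KKT system: testing \eqref{eq:kkt3} with $\delta s\in N$ gives $s_0-\sd_0=B\lambda_0\in M$, and surjectivity of $B^*$ forces $\lambda=-\lambda_0\in G$. You instead solve the primal problem directly as two orthogonal projections in $\LtwoS$: onto $M$ after the shift by $p$, and onto the affine space $s_q+N$. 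This uses only that $M$ is closed, $B$ is injective and $B^*$ is surjective. You need no separate coercivity estimate, because closed range plus injectivity already encodes the Korn-type bound on $G$. You then construct the multipliers explicitly, placing $\lambda$ in $G$ through $(B^*)^*=B$ from \cref{thm:div-grad-duality} rather than through the Helmholtz splitting and surjectivity of $B^*$. Your route is more elementary and constructive, with no constraint qualification or abstract multiplier theorem. The paper's route instead derives the decoupling and the location of the multipliers ($\mu\in N$, $\lambda\in G$) from the optimality system itself, which fits its duality viewpoint.

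Two small completions are worth adding. First, you derive the multiplier conditions only as necessary conditions, so state the one-line converse that gives existence. The fields $\mu^*\in N=M^\perp$ and $\lambda^*=B^+(\sd_0-s_q)\in G$, with $B\lambda^*=\sd_0-s_0^*$, satisfy all stationarity conditions because $\iprod[\LtwoR]{\lambda^*}{B^*t}=\iprod[\LtwoS]{B\lambda^*}{t}$ for $t\in D$. Second, your $P_Ms_q+P_N\sd_0$ should be matched to the paper's expression $s_0^*=P_N\sd_0+(B^*)^+q$. Observe that $\im(B^*)^+$ is the $\Hdiv$-orthogonal complement of $N$ in $D$. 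This equals $M\cap D$, since $\iprod[\Hdiv]{t}{n}=\iprod[\LtwoS]{t}{n}$ whenever $\div n=0$. Hence $P_Ms_q=s_q$ and the two formulas coincide.
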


\begin{proof}
  With $z = (\ed, \sd_0)$ in $\setD_0$ fixed, \BPM2 reduces to
  \begin{align*}
    \smash[b]{\inf_{x \in \XM2} \ \obj(e - \ed, s_0 - \sd_0)}
    \qstq
    e - B u_0 - p &= 0 \in \LtwoS, \\
    B^* s_0 - q &= 0 \in \LtwoR.
  \end{align*}
  This equality-constrained quadratic optimization problem
  \ifArxiv
  admits a unique global minimizer since
  the linear constraint map $x \mapsto (e - B u_0, B^* s_0)$ is surjective
  with kernel $K = \defset{(u_0, B u_0, s_0)}{(u_0, s_0) \in G \times N}$
  and $\obj(e, s_0)$ is coercive on $K$.
  Indeed, since $G \subset Q$ by \cite[Cor. 3.10]{Lewicka:2023},
  this follows from \cref{lem:P+K}: letting
  \else
  admits a unique global minimizer. Indeed,
  the linear constraint map $x \mapsto (e - B u_0, B^* s_0)$ is surjective
  with kernel $K = \defset{(u_0, B u_0, s_0)}{(u_0, s_0) \in G \times N}$,
  and $\obj(e, s_0)$ is coercive on $K$ by \cref{lem:P+K} since
  $G \subset Q$ by \cite[Cor. 3.10]{Lewicka:2023}: letting
  \fi
  $C \dfn \min\bigl(\frac{c}{4 C_1 C_2}, \frac{c}{4}, \frac{1}{2 c} \bigr)$
  gives
  \begin{align*}
    \frac{c}{2} \norm[\LtwoS]{B u_0}^2 +
    \frac{1}{2 c} \norm[\LtwoS]{s_0}^2
    &\ge C \bigl(\norm[\LtwoS]{u_0}^2
      + \norm[\LtwoS]{B u_0}^2
      + \norm[\LtwoS]{s_0}^2 \bigr).
  \end{align*}
  Moreover, the KKT conditions obtained by setting
  the partial Frechét derivatives of $L$ to zero
  (see, e.g., \cite[Thm.~3.2]{Maurer_Zowe:1979})
  uniquely determine the minimizer $(u_0^*, e^*, s_0^*)$
  along with multipliers $\lambda^*$ in $\LtwoR$ and $\mu^*$ in $\LtwoS$:
  \begin{align}
    \label{eq:kkt1}
    \iprod[\LtwoS]{\mu}{B \delta u} &= 0 \qfor \delta u \in G, \\
    \label{eq:kkt2}
    \iprod[\LtwoS]{e - \ed + \mu}{\delta e} &= 0 \qfor \delta e \in \LtwoS, \\
    \label{eq:kkt3}
    \iprod[\LtwoS]{s_0 - \sd_0}{\delta s} +
    \iprod[\LtwoR]{\lambda}{B^* \delta s} &= 0 \qfor \delta s \in D, \\
    B^* s_0 - q &= 0, \notag \\
    e - B u_0 - p &= 0. \notag
  \end{align}
  Condition \eqref{eq:kkt2} is \ifArxiv obviously \else clearly \fi
  equivalent to $e - \ed + \mu = 0$.
  By \cref{lem:M+N=L2},
  \eqref{eq:kkt1} \ifArxiv implies \else gives \fi $\mu \in M^\perp = N$,
  and hence by \cref{prop:inj-surj},
  \eqref{eq:kkt1} is equivalent to $B^* \mu = 0$.
  Similarly, taking $\delta s$ in $\ker B^* = N \subset D$
  gives $s_0 - \sd_0 \in M = \im B$ by \eqref{eq:kkt3}, i.e.,
  $s_0 - \sd_0 = B \lambda_0$ for some $\lambda_0 \in G$,
  making \eqref{eq:kkt3} equivalent to
  $\iprod[\LtwoR]{\lambda_0 + \lambda}{B^* \delta s} = 0$
  for all $\delta s \in D$.
  Surjectivity of $B^*$ now gives $\lambda = - \lambda_0 \in G$,
  and \eqref{eq:kkt3} is finally equivalent to
  $s_0 - \sd_0 + B \lambda = 0$.
  Thus, fixing $(\ed, \sd_0)$ splits \BPM2 into two problems,
  one for $u_0$ and $e$ with the multiplier $\mu$ in $N \subset D$,
  \begin{align*}
    B^* \mu &= 0 \in \LtwoR,
    & e + \mu &= \ed \in \LtwoS,
    & e - B u_0 &= p \in \LtwoS,
  \end{align*}
  and another one for $s_0$ with the multiplier $\lambda$ in $G$,
  \begin{align*}
    s_0 + B \lambda &= \sd_0 \in \LtwoS,
    & B^* s_0 &= q \in \LtwoR.
  \end{align*}
  To conclude the \ifArxiv construction\else proof\fi,
  we regard $B$ and $B^*$ as bounded operators
  on $\HoneD$ and $\HdivN$, respectively,
  so that \cref{lem:psi} \ifArxiv provides \else gives \fi pseudoinverses
  $B^+\: \LtwoS \to \HoneD$ (surjective) and
  $(B^*)^+\: \LtwoR \to \HdivN$ (injective with range $M \cap \HdivN$).
  In terms of these pseudoinverses and the orthogonal projections $P_M, P_N$,
  the unique solutions are readily obtained as
  \ifArxiv
  \begin{equation}
    \label{eq:BPM2-x}
    e^* = P_M \ed + P_N p, \qquad
    \mu^* = \ed - e^* = P_N (\ed - p), \qquad
    u_0^* = B^+ (e^* - p) = B^+ (\ed - p),
  \end{equation}
  respectively
  \begin{equation}
    \label{eq:BPM2-y}
    s_0^* = P_N \sd_0 + (B^*)^+ q, \qquad\qquad
    \lambda^* = B^+ (\sd_0 - s_0^*) = B^+ \sd_0 - B^+ (B^*)^+ q.
    \qedhere
  \end{equation}
  \else
  \begin{align}
    \label{eq:BPM2-x}
    e^* &= P_M \ed + P_N p,
    &\mu^* &= \ed - e^* = P_N (\ed - p),
    &u_0^* = B^+ (e^* - p) = B^+ (\ed - p),
  \end{align}
  and
  \begin{align}
    \label{eq:BPM2-y}
    s_0^* &= P_N \sd_0 + (B^*)^+ q,
    &\lambda^* &= B^+ (\sd_0 - s_0^*) = B^+ \sd_0 - B^+ (B^*)^+ q,
  \end{align}
  respectively.
  \fi
\end{proof}

\begin{remark}
  The solution expressions are identical to
  \cite[Thm.~1]{Gebhardt_Steinbach:2025},
  except that we have $B^+ (B^*)^+$ rather than $\Inv{(B^* B)}$ in $\lambda^*$.
  This is because $B$ and $B^*$ are adjoints
  only in the sense of unbounded operators,
  and the two last expressions in \cref{lem:psi}
  ($A$ injective or surjective) do not apply to $B$ and $B^*$.
  In fact, $\dom(B^* B)$ is a proper subspace of $G$
  and $B^* B$ is not even surjective onto $\LtwoR$.
\end{remark}

\begin{theorem}[Pure Neumann boundary data]
  \label{prop:fixed-zN}
  Let $\bdD = \0$.
  Then, fixing any pair $z = (\ed, \sd_0) \in \setD_0$ in \BPN2
  yields a unique minimizer $x^* = (u^*, e^*, s_0^*)$ in $\XN2$
  and unique dual fields $\lambda^*$ in $G_Q$
  and $\mu^*$ in $N \subset D$,
  all of which depend linearly on $z, f, h$.
  Explicit expressions are given in \cref{eq:BPN2-x,eq:BPN2-y} in the proof.
\end{theorem}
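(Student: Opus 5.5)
We mirror the proof of \cref{prop:fixed-z}, replacing $B$ by $B_N\: G_Q \subset Q \to \LtwoS$ and $\LtwoR$ by $Q$ throughout. With $z = (\ed, \sd_0) \in \setD_0$ fixed, \BPN2 reduces to minimizing $\obj(e - \ed, s_0 - \sd_0)$ over $(u, e, s_0) \in G_Q \times \LtwoS \times D$ subject to $e - B_N u = 0$ and $B_N^* s_0 = q$.

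First, we establish existence and uniqueness. By \cref{prop:inj-surj}, $B_N^*$ is surjective onto $Q$. Since $q \in Q$ by the compatibility condition, the constraint map $(u, e, s_0) \mapsto (e - B_N u, B_N^* s_0) \in \LtwoS \times Q$ is surjective. Its kernel is $K = \defset{(u, B_N u, s_0)}{(u, s_0) \in G_Q \times N}$, using $\ker B_N^* = N$. Coercivity of $\obj$ on $K$ in the norm of $\XN2$ follows from \cref{lem:P+K}, which applies directly because $u \in \Hone \cap Q$. The same constant $C$ as before works. Hence the quadratic problem has a unique minimizer, and the KKT conditions \cite[Thm.~3.2]{Maurer_Zowe:1979} hold with multipliers $\lambda \in Q$ (the constraint now lives in $Q$) and $\mu \in \LtwoS$.

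Second, we simplify the KKT system. The conditions are those of \eqref{eq:kkt1}--\eqref{eq:kkt3}, with $\delta u \in G_Q$, $\delta s \in D$, and $\lambda$ paired in $Q$. Condition \eqref{eq:kkt1} gives $\mu \perp \im B_N = M$, so $\mu \in N$ by \cref{lem:M+N=L2}, which is equivalent to $B_N^* \mu = 0$. Condition \eqref{eq:kkt2} gives $e + \mu = \ed$. For \eqref{eq:kkt3}, testing with $\delta s \in N$ gives $s_0 - \sd_0 \in M = \im B_N$, so $s_0 - \sd_0 = B_N \lambda_0$ for a unique $\lambda_0 \in G_Q$, unique by injectivity of $B_N$. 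Then \eqref{eq:kkt3} becomes $\iprod[Q]{\lambda_0 + \lambda}{B_N^* \delta s} = 0$ for all $\delta s \in D$. Surjectivity of $B_N^*$ onto $Q$ forces $\lambda = -\lambda_0 \in G_Q$.

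This step is the main subtlety. The multiplier must be sought in $Q$, not $\LtwoR$. Otherwise any component of $\lambda$ in $R$ would be invisible, since $\im B^* \subseteq Q$, and uniqueness would fail. Restricting the equality constraint to $Q$ is exactly what makes $\lambda^*$ unique. We must also check that $B_N^*$ maps $D$ into $Q$, i.e., that $-\div s$ is orthogonal to $R$ for every $s \in \Hdivz$. This follows from \cref{thm:div-grad-integral} with $u \in R$, since $T_\nu s = 0$.

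Third, we write the explicit formulas. We regard $B_N\: \Hone \cap Q \to \LtwoS$ and $B_N^*\: \HdivN = \Hdivz \to Q$ as bounded operators with closed ranges. \cref{lem:psi} then yields $B_N^+$ (a left inverse, since $B_N$ is injective) and $(B_N^*)^+$ (a right inverse, since $B_N^*$ is surjective). The decoupled systems are
\begin{align*}
  B_N^* \mu &= 0, & e + \mu &= \ed, & e &= B_N u,
\end{align*}
and
\begin{align*}
  s_0 + B_N \lambda &= \sd_0, & B_N^* s_0 &= q.
\end{align*}
Their solutions, labelled \eqref{eq:BPN2-x} and \eqref{eq:BPN2-y} as in the theorem, are
\begin{align*}
  e^* &= P_M \ed, & \mu^* &= P_N \ed, & u^* &= B_N^+ \ed,
\end{align*}
and
\begin{align*}
  s_0^* &= P_N \sd_0 + (B_N^*)^+ q, & \lambda^* &= B_N^+ \sd_0 - B_N^+ (B_N^*)^+ q.
\end{align*}
These are \eqref{eq:BPM2-x} and \eqref{eq:BPM2-y} with $p = 0$. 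To verify the first system, note that $e^* = P_M \ed$ lies in $M = \im B_N$, so $e^* = B_N u^*$ with $u^* = B_N^+ e^* = B_N^+ \ed$, because $B_N^+$ annihilates $N = (\im B_N)^\perp$. For the second system, $(B_N^*)^+ q \in (\ker B_N^*)^\perp = M$, so $B_N^* s_0^* = q$. Moreover $\sd_0 - s_0^* = P_M \sd_0 - (B_N^*)^+ q$ lies in $M$, which gives $\lambda^* \in G_Q$. All quantities are linear in $z$, $f$, $h$, since $q = f + \div \extN h$.
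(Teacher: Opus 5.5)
Your proposal is correct and is essentially the paper's proof. The paper reruns the argument of \cref{prop:fixed-z} with the substitutions of \cref{tab:dd-problems} ($G \to G_Q$, $\LtwoR \to Q$, $B \to B_N$, $p \to 0$), so the multiplier $\lambda$ lives in $Q$ and the formulas reduce to those of \cref{eq:BPM2-x,eq:BPM2-y} with $p = 0$; your extra checks fill in details the paper leaves implicit, namely that $-\div$ maps $\Hdivz$ into $Q$, that $B_N^+$ annihilates $N$, and that $(B_N^*)^+ q \in M$ (true because $\div$ vanishes on $N$, so $H^{\div}$- and $L^2$-orthogonality to $N$ agree on $\Hdivz$), and your $\sd_0$ in $\lambda^*$ is the correct reading of the paper's formula.
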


\begin{proof}
  \ifcase0
  The proof proceeds exactly as in \cref{prop:fixed-z},
  with certain quantities replaced according to \cref{tab:dd-problems}.
  This gives the following solution components,
  where $u^*, e^*, \mu^*$ simplify because $p = 0$:
  \or
  With $(\ed, \sd_0)$ in $\setD_0$ fixed, \BPN2 reduces to
  \begin{align*}
    \smash[b]{\inf_{(u_0, e, s_0) \in \XN2} \ \obj(e - \ed, s_0 - \sd_0)}
    \qstq
    e - B_N u &= 0 \in \LtwoS, \\
    B_N^* s_0 - q &= 0 \in Q.
  \end{align*}
  Again, the problem admits a unique global minimizer since
  the constraint map $x \mapsto (B_N^* s_0, e - B_N u)$ is surjective
  and $\obj(e, s_0)$ is coercive on its kernel,
  $\defset{(u, B_N u, s_0)}{(u, s_0) \in G_Q \times N}$.
  The Lagrangian now reads
  \begin{align*}
    L(u, e, s_0, \lambda, \mu)
    &= \obj(e - \ed, s_0 - \sd_0)
      + \frac1c \iprod[Q]{\lambda}{B_N^* s_0 - q}
      + c \iprod[\LtwoS]{\mu}{e - B_N u}
      ,
  \end{align*}
  and the associated KKT conditions become
  \begin{align}
    \label{eq:kktN1}
    \iprod[\LtwoS]{\mu}{B_N \delta u} &= 0 \qfor \delta u \in G_Q, \\
    \label{eq:kktN2}
    \iprod[\LtwoS]{e - \ed + \mu}{\delta e} &= 0 \qfor \delta e \in \LtwoS, \\
    \label{eq:kktN3}
    \iprod[\LtwoS]{s_0 - \sd_0}{\delta s} +
    \iprod[Q]{\lambda}{B_N^* \delta s} &= 0 \qfor \delta s \in D, \\
    B_N^* s_0 - q &= 0, \notag \\
    e - B_N u &= 0. \notag
  \end{align}
  Condition \eqref{eq:kktN2} is equivalent to $e - \ed + \mu = 0$.
  By \cref{lem:M+N=L2,prop:inj-surj},
  \eqref{eq:kktN1} implies $\mu \in M^\perp = N$,
  and hence \eqref{eq:kktN1} is equivalent to $B_N^* \mu = 0$.
  Taking $\delta s$ in $\ker B_N^* = N \subset D$
  gives $s_0 - \sd_0 \in M$ by \eqref{eq:kktN3}, i.e.,
  $s_0 - \sd_0 = B_N \lambda_0$ with $\lambda_0 \in G_Q$,
  which makes \eqref{eq:kktN3} equivalent to
  $\iprod[Q]{\lambda_0 + \lambda}{B_N^* \delta s} = 0$
  for $\delta s \in D$.
  Surjectivity of $B_N^*$ now gives $\lambda = - \lambda_0 \in G_Q$,
  and \eqref{eq:kktN3} is finally equivalent to
  $s_0 - \sd_0 + B_N \lambda = 0$.
  Thus, fixing $(\ed, \sd)$ splits \BPN2 into two problems,
  one for $u$ and $e$ with the multiplier $\mu$ in $N \subset D$,
  \begin{align*}
    B_N^* \mu &= 0 \in Q,
    & e + \mu &= \ed \in \LtwoS,
    & e - B_N u &= 0 \in \LtwoS,
  \end{align*}
  and another one for $s_0$ with the multiplier $\lambda$ in $G_Q$,
  \begin{align*}
    s_0 + B_N \lambda &= \sd_0 \in \LtwoS,
    & B_N^* s_0 &= q \in Q.
  \end{align*}
  Using orthogonal projections $P_M, P_N$
  and pseudoinverses $B_N^+\: \LtwoS \to \Hone \cap Q$ (surjective)
  and $(B_N^*)^+\: Q \to \Hdivz$ (injective with range $M \cap \Hdivz$),
  the unique solutions are readily obtained as
  \fi
  \ifArxiv
  \begin{equation}
    \label{eq:BPN2-x}
    e^* = P_M \ed, \qquad
    \mu^* = \ed - e^* = P_N \ed, \qquad
    u^* = B_N^+ e^* = B_N^+ \ed,
  \end{equation}
  respectively
  \begin{equation}
    \label{eq:BPN2-y}
    s_0^* = P_N \sd_0 + (B_N^*)^+ q, \qquad\qquad
    \lambda^* = B_N^+ (\sd_0 - s_0^*) = B_N^+ \sd - B_N^+ (B_N^*)^+ q.
    \qedhere
  \end{equation}
  \else
  \begin{align}
    \label{eq:BPN2-x}
    e^* &= P_M \ed,
    &\mu^* &= \ed - e^* = P_N \ed,
    &u^* = B_N^+ e^* = B_N^+ \ed,
  \end{align}
  and
  \begin{align}
    \label{eq:BPN2-y}
    s_0^* &= P_N \sd_0 + (B_N^*)^+ q,
    &\lambda^* &= B_N^+ (\sd_0 - s_0^*) = B_N^+ \sd - B_N^+ (B_N^*)^+ q,
  \end{align}
  respectively.
  \fi
\end{proof}

\begin{table}
  \caption{Corresponding quantities in problems \BPM2, \BPN2, \BPD2.}
  \def\PR#1{\text{BP#1}_2\:\quad}
  \ifArxiv\vspace*{-1.6ex}\fi
  \centering
  \begin{tabular}{c}
    \toprule
    $\displaystyle
    \begin{aligned}
      \PR M& G && D && \LtwoR && B && B^* &&u_0&& s_0&&\sd_0&&p&&q\\[-1pt]
      \PR N&G_Q&& D && Q  &&B_N&&B_N^*&& u && s_0&&\sd_0&&0&&q\\[-1pt]
      \PR D&\Honez&&\LtwoS&&\Honez^*&&B_D&&B_D^*&&u_0&&  s  &&\sd &&p&&f
    \end{aligned}$ \\[-1pt]
    \bottomrule
  \end{tabular}
  \label{tab:dd-problems}
\end{table}

\begin{theorem}[Pure  Dirichlet boundary data]
  \label{prop:fixed-zD}
  Let $\bdN = \0$.
  Then, fixing any pair $z = (\ed, \sd) \in \setD$
  in the low-regularity problem \BPD2
  yields a unique minimizer $x^* = (u_0^*, e^*, s^*)$ in $\XD2$
  and unique dual fields $\lambda^*$ in $\Honez^{**}\! \isom \Honez$
  and $\mu^*$ in $N \subset \Hdiv$,
  all of which depend linearly on $z, f, g$.
  Explicit expressions are given in \cref{eq:BPD2-x,eq:BPD2-y} in the proof.
\end{theorem}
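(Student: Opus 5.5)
The proof will mirror that of \cref{prop:fixed-z}, with the substitutions of the third row of \cref{tab:dd-problems}: $G \to \Honez$, $D \to \LtwoS$, $\LtwoR \to \Honez^*$, $B \to B_D$, $B^* \to B_D^*$, $s_0 \to s$, $\sd_0 \to \sd$, $q \to f$. With $z = (\ed, \sd) \in \setD$ fixed, \BPD2 becomes an equality-constrained quadratic problem on the Hilbert space $\XD2$. The constraint map $x \mapsto (e - B_D u_0, B_D^* s)$ into $\LtwoS \times \Honez^*$ is bounded. It is surjective because $B_D^*$ is surjective by \cref{prop:low-reg-B-B*}, and $e$ is free in the first component. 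Its kernel is $\defset{(u_0, B_D u_0, s)}{(u_0, s) \in \Honez \times N}$. Coercivity of $\obj$ on this kernel follows exactly as before from \cref{lem:P+K}, since $\Honez \subset Q$. So a unique minimizer exists, and the KKT conditions of \cite[Thm.~3.2]{Maurer_Zowe:1979} hold with multipliers $\mu \in \LtwoS$ and $\lambda \in \Honez^{**}$. The latter space is identified with $\Honez$ by reflexivity, and the term $\frac1c \iprod[\LtwoR]{\lambda}{B^* s_0 - q}$ is read as the duality pairing $\frac1c\sprod[\Honez^{**},\Honez^*]{\lambda}{B_D^* s - f}$.

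Next I would analyse the KKT system as in the mixed case. Stationarity in $e$ gives $e - \ed + \mu = 0$. Stationarity in $u_0$ gives $\iprod[\LtwoS]{\mu}{B_D \delta u} = 0$ for all $\delta u \in \Honez$, so $\mu \in M^\perp = N$ by \cref{lem:M+N=L2}, i.e.\ $B_D^* \mu = 0$. Since $N \subset \Hdiv$, this yields $\mu^* \in N \subset \Hdiv$ as claimed. Stationarity in $s$ reads
\begin{equation*}
  \iprod[\LtwoS]{s - \sd}{\delta s} + \sprod{\lambda}{B_D^* \delta s} = 0
  \qfor \delta s \in \LtwoS .
\end{equation*}
The key point is that, after identifying $\lambda \in \Honez$, the definition $\sprod{B_D^* \delta s}{\lambda} = \iprod[\LtwoS]{\delta s}{B_D \lambda}$ turns the second term into $\iprod[\LtwoS]{B_D\lambda}{\delta s}$. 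Hence the condition is simply $s - \sd + B_D \lambda = 0$ in $\LtwoS$. This is actually more direct than the mixed case: no auxiliary $\lambda_0$ is needed, because $\delta s$ ranges over all of $\LtwoS$. Uniqueness of $\lambda$ follows from injectivity of $B_D$.

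The system therefore splits into two parts. The first is $B_D^*\mu = 0$, $e + \mu = \ed$, $e - B_D u_0 = p$. The second is $s + B_D\lambda = \sd$, $B_D^* s = f$. Now $B_D$ and $B_D^*$ are genuinely bounded operators between Hilbert spaces, with $B_D$ injective with closed range $M$ and $B_D^*$ surjective. So \cref{lem:psi} applies fully, and I would write
\begin{equation}
  \label{eq:BPD2-x}
  e^* = P_M \ed + P_N p, \qquad
  \mu^* = P_N(\ed - p), \qquad
  u_0^* = B_D^+(\ed - p),
\end{equation}
\begin{equation}
  \label{eq:BPD2-y}
  s^* = P_N \sd + (B_D^*)^+ f, \qquad
  \lambda^* = B_D^+(\sd - s^*) = B_D^+ \sd - \Inv{(B_D^* B_D)} f .
\end{equation}
For the last identity I would use $B_D^+ = \Inv{(B_D^* B_D)} B_D^*$ (valid since $B_D$ is injective) together with $B_D^* (B_D^*)^+ f = f$. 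This is consistent with the remark comparing to \cite{Gebhardt_Steinbach:2025}. All formulas are visibly linear in $z, f, g$.

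The main obstacle is the careful handling of the identification $\Honez^{**} \isom \Honez$ in the Lagrangian and in the $s$-stationarity condition, so that $\lambda$ acts through $B_D$. A secondary point is checking that $P_M = B_D B_D^+$ and $P_N = \id - B_D^{*+}B_D^*$ hold in this non-pivot setting. For that I would verify that $(B_D^*)^+ f \in M$ and $P_N \sd \in N = \ker B_D^*$, using the Penrose axioms of \cref{lem:psi}.
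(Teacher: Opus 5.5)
Your proposal is correct and follows the paper's route. The paper likewise reruns the argument of \cref{prop:fixed-z} with the substitutions of \cref{tab:dd-problems}, replaces the $L^2$ product on displacements by the duality pairing, and arrives at the same formulas. Your direct derivation of $s-\sd+B_D\lambda=0$ is a valid shortcut of the paper's detour through $\lambda_0$. It works here because the multiplier already lies in $\Honez^{**}\isom\Honez=\dom B_D$, which is not the case for the unbounded $B^*$ of the mixed case. Your form $\Inv{(B_D^*B_D)}f$ is the identity recorded in the remark following the theorem.

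One justification should be fixed: $\Honez\subset Q$ is false. The same inclusion $G\subset Q$ in the mixed-case proof has this defect too. $R$ contains the constant translations, so $u=\phi b$ with $b\in\Rd\setminus\set{0}$, $0\le\phi\in\test[\R]$ and $\phi\ne0$ satisfies $\iprod[\LtwoR]{u}{b}=\abs{b}^2\int_\Omega\phi>0$, hence $u\notin Q$. What you actually need is Korn's inequality on $\Honez$ itself, which is how $\Honez\cap R=\set{0}$ enters quantitatively. Integrating by parts gives $2\norm[\LtwoS]{\Grad u_0}^2=\norm[\LtwoM]{\grad u_0}^2+\norm[\Ltwo]{\tr\grad u_0}^2\ge\norm[\LtwoM]{\grad u_0}^2$ for $u_0\in\Honez$. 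Poincaré's inequality on $\Honez$ then yields the required coercivity of $\obj$ on the kernel.
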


\begin{proof}
  \ifcase0
  The proof proceeds again as in \cref{prop:fixed-z},
  with certain quantities replaced according to \cref{tab:dd-problems}.
  In addition, $\iprod[\LtwoR]{\fcdot}{\fcdot}$
  is replaced with the duality pairing
  $\sprod[\Honez,\Honez^*]{}{}$.
  This gives the following solution components:
  \or
  With $(\ed, \sd)$ in $\setD$ fixed, \BPD2 reduces to
  \begin{align*}
    \smash[b]{\inf_{(u_0, e, s) \in \XD2} \ \obj(e - \ed, s - \sd)}
    \qstq
    e - B_D u_0 - p &= 0 \in \LtwoS, \\
    B_D^* s - f &= 0 \in \Honez^*.
  \end{align*}
  Again, the problem admits a unique global minimizer since
  the constraint map $x \mapsto (B_D^* s, e - B_D u_0)$ is surjective
  and $\obj(e, s)$ is coercive on its kernel,
  $\defset{(u_0, B_D u_0, s)}{(u_0, s) \in \Honez \times N}$.
  The Lagrangian now reads
  \begin{align*}
    L(u_0, e, s, \lambda, \mu)
    &= \obj(e - \ed, s - \sd)
      + \frac1c \sprod[\Honez,\Honez^*]{\lambda}{B_D^* s - f}
      + c \iprod[\LtwoS]{\mu}{e - B_D u_0}
      ,
  \end{align*}
  and the associated KKT conditions become
  \begin{align}
    \label{eq:kktD1}
    \iprod[\LtwoS]{\mu}{B_D \delta u} &= 0 \qfor \delta u \in \Honez, \\
    \label{eq:kktD2}
    \iprod[\LtwoS]{e - \ed + \mu}{\delta e} &= 0 \qfor \delta e \in \LtwoS, \\
    \label{eq:kktD3}
    \iprod[\LtwoS]{s - \sd}{\delta s} +
    \sprod[\Honez,\Honez^*]{\lambda}{B_D^* \delta s}
    &= 0 \qfor \delta s \in \LtwoS, \\
    B_D^* s - f &= 0, \notag \\
    e - B_D u_0 - p &= 0. \notag
  \end{align}
  Condition \eqref{eq:kktD2} is equivalent to $e - \ed + \mu = 0$.
  By \cref{lem:M+N=L2,prop:low-reg-B-B*},
  \eqref{eq:kktD1} implies $\mu \in M^\perp = N$,
  and hence \eqref{eq:kktD1} is equivalent to $B_D^* \mu = 0$.
  Taking $\delta s$ in $\ker B_D^* = N$
  gives $s - \sd \in M$ by \eqref{eq:kktD3}, i.e.,
  $s - \sd = B_D \lambda_0$ with $\lambda_0 \in \Honez$,
  which makes \eqref{eq:kktD3} equivalent to
  $\sprod[\Honez,\Honez^*]{\lambda_0 + \lambda}{B_D^* \delta s} = 0$
  for $\delta s \in \LtwoS$.
  Surjectivity of $B_D^*$ now gives $\lambda = - \lambda_0 \in \Honez$,
  and \eqref{eq:kktD3} is finally equivalent to
  $s - \sd + B_D \lambda = 0$.
  Thus, fixing $(\ed, \sd)$ splits \BPD2 into two problems,
  one for $u$ and $e$ with the multiplier $\mu$ in $N \subset \Hdiv$,
  \begin{align*}
    B_D^* \mu &= 0 \in \LtwoS,
    & e + \mu &= \ed \in \LtwoS,
    & e - B_D u_0 &= p \in \LtwoS,
  \end{align*}
  and another one for $s$ with the multiplier $\lambda$ in $\Honez$,
  \begin{align*}
    s + B_D \lambda &= \sd \in \LtwoS,
    & B_D^* s &= f \in \Honez^*.
  \end{align*}
  Using the orthogonal projections $P_M, P_N$
  and pseudoinverses $B_D^+\: \LtwoS \to \Honez$ (surjective)
  and $(B_D^*)^+\: \Honez^* \to \LtwoS$ (injective with range $M$),
  the unique solutions are readily obtained as
  \fi
  \ifArxiv
  \begin{equation}
    \label{eq:BPD2-x}
    e^* = P_M \ed + P_N p, \qquad
    \mu^* = \ed - e^* = P_N (\ed - p), \qquad
    u_0^* = B_D^+ (e^* - p) = B_D^+ (\ed - p),
  \end{equation}
  respectively
  \begin{equation}
    \label{eq:BPD2-y}
    s^* = P_N \sd + (B_D^*)^+ f, \qquad\qquad
    \lambda^* = B_D^+ (\sd - s^*) = B_D^+ \sd - B_D^+ (B_D^*)^+ f.
    \qedhere
  \end{equation}
  \else
  \begin{align}
    \label{eq:BPD2-x}
    e^* &= P_M \ed + P_N p,
    &\mu^* &= \ed - e^* = P_N (\ed - p),
    &u_0^* = B_D^+ (e^* - p) = B_D^+ (\ed - p),
  \end{align}
  and
  \begin{align}
    \label{eq:BPD2-y}
    s^* &= P_N \sd + (B_D^*)^+ f,
    &\lambda^* &= B_D^+ (\sd - s^*) = B_D^+ \sd - B_D^+ (B_D^*)^+ f,
  \end{align}
  respectively.
  \fi
\end{proof}

\begin{remark}
  Here, $B_D^+ (B_D^*)^+ = \Inv{(B_D^* B_D)}$ holds
  as in \cite[Thm.~1]{Gebhardt_Steinbach:2025}
  since $B_D$ and $B_D^*$ are bounded linear maps
  and hence $P_M, P_N$ can be expressed in terms of each map
  and its pseudoinverse according to \cref{lem:psi}.
\end{remark}

To obtain a unified reformulation of all problems,
we use projected and lifted data
$\lN \dfn P_N p \in N$ and $\lM \dfn (B^*)^+ q + P_M \extN h \in M$,
respectively, with $B^*$ replaced by $B_D^*$ or $B_N^*$ where appropriate.
The restrictions of $\norm[\LtwoS]{}$ to $M$ and $N$
will succinctly be denoted as $\norm[M]{}$ and $\norm[N]{}$.

\begin{theorem}
  \label{thm:BPMR}
  Each of the problems \BPM2, \BPN2 and \BPD2 can
  equivalently be stated as the (unshifted) reduced problem BPMR,
  \begin{equation*}
    \inf_{z \in Z} \
    \frac{c}{2} \norm[N]{P_N \ed - \lN}^2 +
    \frac{1}{2 c} \norm[M]{P_M \sd - \lM}^2
    \qstq z \in \setD,
  \end{equation*}
  in the following sense.
  \begin{enumerate}
  \item For any pair $z_0 = (\ed, \sd_0)$ in $\setD_0$,
    the associated minimizer $x^* = (u_0^*, e^*, s_0^*)$
    of \BPM2, \BPN2 or \BPD2 with $z_0$ fixed
    according to \cref{prop:fixed-z,prop:fixed-zN,prop:fixed-zD}
    gives the same objective value as $z$ for BPMR.
  \item \BPM2, \BPN2 or \BPD2 admits a minimizer $(x^*, z_0^*)$
    if and only if BPMR admits a minimizer $z^*$.
  \end{enumerate}
\end{theorem}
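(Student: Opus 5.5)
The plan is to substitute the explicit minimizers from \cref{prop:fixed-z,prop:fixed-zN,prop:fixed-zD} into the objective $\obj$ and compute its value as a function of $z_0$. Afterwards, the shift $\sd = \sd_0 + \extN h$ will convert the result to the unshifted form.

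For \BPM2 I use \eqref{eq:BPM2-x}. Since $P_M + P_N = \id$ on $\LtwoS$ by \cref{lem:M+N=L2},
\begin{equation*}
  e^* - \ed = P_N p - P_N \ed = -(P_N \ed - \lN),
\end{equation*}
which gives the first term $\frac c2 \norm[N]{P_N \ed - \lN}^2$. Next, \eqref{eq:BPM2-y} gives
\begin{equation*}
  s_0^* - \sd_0 = (B^*)^+ q - P_M \sd_0 .
\end{equation*}
By \cref{lem:psi}, $\im (B^*)^+ = (\ker B^*)^\perp$, where the complement is taken in $\HdivN$ with its graph norm, so I must check that this range lies in $M$. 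The proof of \cref{prop:fixed-z} already records $\im (B^*)^+ = M \cap \HdivN$; I will verify it as follows. An element $t \in \HdivN$ that is $\Hdiv$-orthogonal to $N$ satisfies $\iprod[\LtwoS]{t}{n} = 0$ for $n \in N$, because $\div n = 0$ makes the divergence part of the inner product vanish. Hence $t \in N^\perp = M$ in $\LtwoS$.

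Substituting $\sd_0 = \sd - \extN h$ and using $P_M \sd_0 = P_M \sd - P_M \extN h$, I get
\begin{equation*}
  s_0^* - \sd_0 = -\bigl(P_M \sd - (B^*)^+ q - P_M \extN h\bigr) = -(P_M \sd - \lM).
\end{equation*}
This yields the second term and proves (i) for \BPM2. Consistency also needs $(\ed, \sd) \in \setD$ if and only if $(\ed, \sd_0) \in \setD_0$, which holds by the definition of $\setD_0$.

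For \BPN2 the same computation applies with $p = 0$, so $\lN = 0$, and with $B_N^*$ replacing $B^*$ as in \cref{tab:dd-problems}; $\ker B_N^* = N$ by \cref{prop:inj-surj}. For \BPD2 there is no shift, $h = 0$, so $\lM = (B_D^*)^+ f$. Here $\im (B_D^*)^+ = (\ker B_D^*)^\perp = N^\perp = M$ directly in $\LtwoS$, by \cref{prop:low-reg-B-B*}.

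For (ii), note that for each fixed $z_0$ the inner problem over $x$ attains its infimum at the unique $x^*(z_0)$. So $\inf_{(x,z_0)} \obj = \inf_{z_0} \obj(x^*(z_0), z_0)$, and by (i) this equals the BPMR infimum, with the bijection $z_0 \leftrightarrow z$ between $\setD_0$ and $\setD$. If $(x^*, z_0^*)$ is a minimizer of the full problem, then $x^* = x^*(z_0^*)$ by uniqueness, so the corresponding $z^*$ attains the BPMR infimum. Conversely, if $z^*$ minimizes BPMR, then $(x^*(z_0^*), z_0^*)$ attains the infimum of the full problem.

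The step I expect to be the main obstacle is the identification $\im (B^*)^+ \subseteq M$ when the pseudoinverse is taken with respect to the $\Hdiv$ inner product rather than the $L^2$ one. The orthogonality argument above should handle it, since the divergence part of the $\Hdiv$ inner product vanishes against elements of $N$. The remaining steps are bookkeeping.
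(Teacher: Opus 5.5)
Your proposal is correct and takes the same route as the paper. You substitute the explicit minimizers from \cref{eq:BPM2-x,eq:BPM2-y} (and their \BPN2, \BPD2 analogues) into $\obj$, use $P_M + P_N = \id$, undo the shift $\sd_0 = \sd - \extN h$, and get (ii) from the unique inner minimizer $x^*(z_0)$; your check that $\im (B^*)^+ \subseteq M$ via $\Hdiv$-orthogonality to $N$ (the argument of \cref{lem:orth-Dperp}) and your infimum argument for (ii) just spell out details the paper leaves implicit.
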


\begin{proof}
  Substituting $e^*$ and $s^* = s_0^* - \extN h$ from
  \eqref{eq:BPM2-x}, \eqref{eq:BPM2-y} or
  \eqref{eq:BPN2-x}, \eqref{eq:BPN2-y} or
  \eqref{eq:BPD2-x}, \eqref{eq:BPD2-y}
  into the objective with $P_N p$ replaced by $\lN$ and
  $(B^*)^+ q + P_M \extN h$ replaced by $\lM$ gives
  \begin{equation*}
    \frac{c}{2} \norm[\LtwoS]{e^* - \ed}^2 +
    \frac{1}{2 c} \norm[\LtwoS]{s_0^* - \sd_0}^2 =
    \frac{c}{2} \norm[N]{\lN - P_N \ed}^2 +
    \frac{1}{2 c} \norm[M]{\lM - P_M \sd}^2.
  \end{equation*}
  This proves claim (i). Claim (ii) is an immediate consequence.
\end{proof}

\begin{remark}
  We have $\lN = 0$ in \BPN2 since $g = 0$,
  and $\lM = (B^*)^+ f$ in \BPD2 since $h = 0$.
  In any case, letting $\ell \dfn \lN + \lM$
  gives a simple reduced form of \BPM0:
  \begin{equation*}
    P_N e + P_M s = \ell, \quad \cg(e, s) = 0.
  \end{equation*}
  Moreover, the following observation implies
  $\extN h \in M$ and hence $\lM = (B^*)^+ q + \extN h$.
\end{remark}

\begin{lemma}
  \label{lem:orth-Dperp}
  $D^\perp$ is orthogonal to $N$ both in $\Hdiv$ and in $\LtwoS$.
\end{lemma}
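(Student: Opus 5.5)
We have to show that $D^\perp$, the orthogonal complement of $D = \HdivN$ in $\Hdiv$, is orthogonal to $N = \ker B^* = \defset{s \in D}{\div s = 0}$ in both scalar products. The idea is that $N \subset D$, and that on $N$ the $\Hdiv$ scalar product collapses to the $L^2$ scalar product.

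First, orthogonality in $\Hdiv$ is immediate. By \cref{lem:M+N=L2}, $N \subseteq D$. Since $D^\perp$ is by definition the orthogonal complement of $D$ in $\Hdiv$, every $t \in D^\perp$ and $s \in N$ satisfy $\iprod[\Hdiv]{t}{s} = 0$.

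Second, we transfer this to $\LtwoS$ by expanding the $\Hdiv$ scalar product. For $t \in D^\perp$ and $s \in N$,
\begin{equation*}
  0 = \iprod[\Hdiv]{t}{s}
  = \iprod[\LtwoS]{t}{s} + \iprod[\LtwoR]{\div t}{\div s}
  = \iprod[\LtwoS]{t}{s},
\end{equation*}
because $\div s = 0$ for $s \in N$. Hence $t \perp s$ in $\LtwoS$ as well.

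There is no real obstacle. The only thing to check is the membership $N \subseteq D$, which holds by the definition of $N$ in \cref{lem:M+N=L2}. As a consequence, $D^\perp \subset N^{\perp_{L^2}} = M$ by \cref{lem:M+N=L2}. In particular $\extN h \in D^\perp$ lies in $M$, so $P_M \extN h = \extN h$, which is the use announced in the preceding remark.
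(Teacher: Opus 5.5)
Your proof is correct and follows the paper's argument exactly: orthogonality in $\Hdiv$ comes from $N \subset D$, and orthogonality in $\LtwoS$ comes from expanding the $\Hdiv$ scalar product and using $\div s = 0$ to drop the divergence term. Your closing observation that $D^\perp \subset M$, and hence $\extN h \in M$, is also correct and is the consequence the paper draws from this lemma.
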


\begin{proof}
  In $\Hdiv = D \oplus D^\perp$ this holds since $N \subset D$.
  For any $(s, t) \in N \times D^\perp$ it follows from $\div s = 0$
  that $0 = \iprod[\Hdiv]{s}{t} = \iprod[\LtwoS]{s}{t} + 0$.
\end{proof}

To obtain a similar result for $G^\perp$ and $G \cap R$,
consider on $\Hone$ the geometry induced by the graph norm of $\Grad$,
giving the space $\Hsym$, which coincides with $\Hone$ as set,
but is equipped with the scalar product
\begin{equation*}
  \iprod[\Hsym]{u}{v} \dfn
  \iprod[\LtwoR]{u}{v} + \iprod[\LtwoS]{\Grad u}{\Grad v},
\end{equation*}
similarly for $\Honez$ and $G = \HoneD$.
The norms on these spaces are equivalent
by Remark \labelcref{rem:closed-op},
and all prior results for $\Hone$, $\Honez$, $\HoneD$
remain valid for $\Hsym$, $\Hsym[0]$, $\Hsym[D]$.
In \cref{lem:ext-N}, we adapt the notation to clarify the key difference:
$\Hsym = G \oplus G_s^\perp$ with $\extD_s\: \Hphalf \to G_s^\perp$.

\begin{lemma}
  \label{lem:orth-Gperp}
  $G_s^\perp$ is orthogonal to $G \cap R$ both in $\Hsym$ and in $\LtwoR$.
\end{lemma}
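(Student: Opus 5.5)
The argument mirrors \cref{lem:orth-Dperp}, with the roles of $\div$ and $\Grad$ exchanged. Rigid motions have vanishing symmetric gradient, so the $\Hsym$ scalar product reduces to the $L^2$ scalar product on $G \cap R$.

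First, orthogonality in $\Hsym$ is immediate. Since $R \subset C^\oo(\Omega,\Rd)$ consists of affine maps and $\Omega$ is bounded, every $r \in R$ lies in $\Hone$, so $G \cap R$ is a subset of $G$. By definition, $G_s^\perp$ is the orthogonal complement of $G$ in $\Hsym$, hence $\iprod[\Hsym]{r}{v} = 0$ for all $r \in G \cap R$ and $v \in G_s^\perp$.

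Second, for $L^2$ orthogonality, take $(r, v) \in (G\cap R) \times G_s^\perp$. Since $\Grad r = 0$ by the definition of $R$, we get
\begin{equation*}
  0 = \iprod[\Hsym]{r}{v}
  = \iprod[\LtwoR]{r}{v} + \iprod[\LtwoS]{\Grad r}{\Grad v}
  = \iprod[\LtwoR]{r}{v},
\end{equation*}
which is the claim.

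The only point needing care is why the $\Hsym$ geometry is used at all. With the standard $\Hone$ product, the second term would be $\iprod[\LtwoM]{\grad r}{\grad v}$, and $\grad r = A \in \Ad$ need not vanish. This is exactly why the lemma is stated for $G_s^\perp$ rather than the complement in $\Hone$. Note also that in the cases $\bdD \ne \0$ we have $G \cap R = \set{0}$ by \cite[Cor.~3.10]{Lewicka:2023}, so the statement is trivial there. Its content lies in the pure Neumann case $G = \Hone$, where $G \cap R = R$. In that case $G_s^\perp = \set{0}$, so the statement is again trivially true. There is no real obstacle beyond checking that $R \subset \Hone$.
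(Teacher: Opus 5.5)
Your proof is correct and follows the paper's argument: $G \cap R \subseteq G$ gives orthogonality to $G_s^\perp$ in $\Hsym$ by definition, and $\Grad r = 0$ reduces $\iprod[\Hsym]{r}{v}$ to $\iprod[\LtwoR]{r}{v}$. Your side observation is also correct but not needed: the lemma is trivial in every case, since $G \cap R = \set{0}$ when $\bdD \ne \0$ and $G_s^\perp = \set{0}$ when $G = \Hone$.
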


\begin{proof}
  In $\Hsym$ this is obvious.
  For any $(u, v) \in (G \cap R) \times G_s^\perp$ it follows from $\Grad u = 0$
  that $0 = \iprod[\Hsym]{u}{v} = \iprod[\LtwoR]{u}{v} + 0$.
\end{proof}

This result suggests $\Hsym$ as the most appropriate space
for the displacement fields:
it is more natural than $\Hone$ in terms of its geometry.

\begin{remark}
  If any \ifArxiv of the \fi data-driven problem\ifArxiv s\fi\
  admits a minimizer,
  then the unique physical \ifArxiv strain-stress \fi fields $(e^*, s^*) \in Z$
  determine all associated auxiliary fields $(\ed^*, \sd^*) \in \setD$
  via a set-valued \emph{feedback operator} $\Phi$
  given by \emph{pointwise minimization} over $\Omega$,
  \begin{align*}
    \Phi\: Z &\rightrightarrows \setD,
    &\Phi(e^*, s^*)(x) &\dfn \argmin_{(\mathfrak e, \mathfrak s) \in \data} \
    \obj(e^*(x) - \mathfrak e, s^*(x) - \mathfrak s) \qfor x \in \Omega.
  \end{align*}
  This suggests alternating direction methods (ADM)
  as suitable solution algorithms.
\end{remark}

\section{Spatial Dimension One}
\label{sec:1d}

To add concreteness to the theory above, we now give
explicit representations of the relevant spaces, operators, etc.\
for $\Omega = (0, L)$ with dimension $d = 1$.
Since $\Rd = \Md = \Sd = \R$, $\Gamma = \set{0, L}$,
and $\div = \grad = \Grad = \pd_x$ (weak derivative),
everything simplifies considerably.
The basic Hilbert spaces are
\begin{align*}
  \LtwoR &= \Ltwo = \LtwoS, \\
  \Hsym = \Hone &= H^1(\Omega) = \Hdiv, \\
  \Hsym[0] = \Honez &= H_0^1(\Omega) = \Hdivz.
\end{align*}
We have either $\bdD = \Gamma$ (Dirichlet), $\bdN = \Gamma$ (Neumann),
or $\bdD = \set{0}$ and $\bdN = \set{L}$ or vice versa (mixed).
The regularity of boundary data is not an issue since
\begin{equation*}
  \Hphalf \cong \R^2 \cong \Hnhalf = \Hnhalff.
\end{equation*}
Let $\uvec \dfn L^{-1/2} \chi_\Omega \in \Ltwo$,
with $\norm[\Ltwo]{\uvec} = 1$.
Then, we have the ``rigid body spaces''
(cf.\ \cite[Prop.~6]{Gebhardt_Lange_Steinbach:2025}
and \cite[Prop.~3]{Gebhardt_Steinbach:2025})
\begin{gather*}
  R = \defset{(x \mapsto b)}{b \in \R} =
  \Span\set{\uvec} = \set{\text{constant $L^2$ functions}}, \\
  Q = \defset{u \in \Ltwo}{\textstyle\int u = 0} =
  \set{\uvec}^\perp = \set{\text{zero-mean $L^2$ functions}}.
\end{gather*}
With a sloppy notation for $G, D \subseteq H^1(\Omega)$,
we obtain further relevant subspaces:
\begin{align*}
  \bdD &= \Gamma\:
  & G &= H_0^1(\Omega), & D &= H^1(\Omega),
  & M &= Q, & N &= R, \\
  \bdD &= \set{0}\:
  & G &= \defset{u}{u(0) = 0}, & D &= \defset{s}{s(L) = 0},
  & M &= \Ltwo, & N &= \set{0}, \\[-2pt]
  \bdD &= \set{L}\:
  & G &= \defset{u}{u(L) = 0}, & D &= \defset{s}{s(0)  = 0},
  & M &= \Ltwo, & N &= \set{0}, \\
  \bdD &= \0\:
  & G &= H^1(\Omega), & D &= H_0^1(\Omega),
  & M &= \Ltwo, & N &= \set{0}.
\end{align*}
The extension and lifting operators associated with $G$ and $D$ read
(cf.\ \cite[Prop.~1]{Gebhardt_Steinbach:2025}):
\begin{align*}
  \bdD &= \Gamma\:
  &(\extD g)(x)
  &= \frac{\sinh(L - x)}{\sinh(L)} g(0)
    + \frac{\sinh(x)}{\sinh(L)} g(L),
    \qquad (\extN h)(x) = 0, \\
  \bdD &= \set{0}\:
  &(\extD g)(x)
  &= \frac{\cosh(L - x)}{\cosh(L)} g(0),
    \qquad\ \ (\extN h)(x) = +\frac{\cosh(x)}{\cosh(L)} h(L), \\[-1pt]
  \bdD &= \set{L}\:
  &(\extD g)(x)
  &= \frac{\cosh(x)}{\cosh(L)} g(L),
    \qquad\ \ (\extN h)(x) = -\frac{\cosh(L - x)}{\cosh(L)} h(0), \\
  \bdD &= \0\:
  &(\extD g)(x)
  &= 0,
    \qquad(\extN h)(x) =
    \frac{\sinh(x)}{\sinh(L)} h(L)
    - \frac{\sinh(L - x)}{\sinh(L)} h(0).
\end{align*}
The inhomogeneities in \BPM1 and \BPM2 are
$p = \pd_x \extD g$ and $q = f + \pd_x \extN h$,
yielding $p = 0$ in \BPN2 and $q = f$ in \BPD2 as above.
In \BPN2, we have $G_Q = H^1(\Omega) \cap Q$,
the zero-mean $H^1$ functions.
The projections $P_M$ and $P_N$ are trivial if $\bdD \ne \Gamma$;
they have the following explicit representations
in the pure Dirichlet case, $\bdD = \Gamma$:
\begin{align*}
  P_N s &= \iprod[\Ltwo]{\uvec}{s} \uvec \in \ker B^*,
  & P_M s &= s - P_N s \in \im B.
\end{align*}
The pseudoinverse $B^+$ in \cref{prop:fixed-z}
acts as an integral operator on $p \in \Ltwo$;
cf.\ \cite[Prop.~6]{Gebhardt_Lange_Steinbach:2025}
and \cite[Prop.~3]{Gebhardt_Steinbach:2025}:
\begin{align*}
  \bdD &= \Gamma\:
  &(B^+ p)(x) &= \int_0^x p(y) \dy - \frac xL \int_0^L p(y) \dy, \\
  a \in \set{0, L}, \quad\bdD &= \set{a}\:
  &(B^+ p)(x) &= \int_a^x p(y) \dy, \\
  \bdD &= \0\:
  &(B^+ p)(x) &= \int_L^x p(y) \dy + \frac1L \int_0^L y p(y) \dy.
\end{align*}
Likewise, $(B^*)^+$ acts as an integral operator on $q \in \Ltwo$;
cf.\ \cite[Prop.~3]{Gebhardt_Steinbach:2025}:
\begin{align*}
  \bdD &= \Gamma\:
  &((B^*)^+ q)(x) &= \int_x^L q(y) \dy - \frac1L \int_0^L y q(y) \dy, \\
  a \in \set{0, L}, \quad\bdD &= \set{a}\:
  &((B^*)^+ q)(x) &= \int_x^{L - a} q(y) \dy, \\
  \bdD &= \0\:
  &((B^*)^+ q)(x) &= \int_x^0 q(y) \dy + \frac xL \int_0^L q(y) \dy.
\end{align*}
The expressions for $B^+$ and $(B^*)^+$ in the case $\bdD = \0$
hold also for $B_N^+$ and $(B_N^*)^+$ in \cref{prop:fixed-zN},
and in the case $\bdD = \Gamma$
for $B_D^+$ and $(B_D^*)^+$ in \cref{prop:fixed-zD}.
(Of course, for $(B_D^*)^+$ this pertains only to
the dense subspace $\Ltwo \subset H_0^1(\Omega)^*$.)

Finally, the data-driven problems
with pure Neumann or mixed boundary data
are \emph{universally solvable}, as follows.
For pure Neumann data,
the compatibility condition $q \in Q$
becomes $\int_0^L f(y) \dy = h(0) - h(L)$,
and \BPN2 (without the shift $\extN h$) reads
\begin{equation*}
  \inf \ \obj(e - \ed, s - \sd) \qstq
  \pd_x u = e, \quad
  \pd_x s = -f, \quad
  s(0) = h(0), \quad s(L) = h(L).
\end{equation*}
From the boundary conditions, we have the unique optimal stress field
\begin{equation*}
  s^*(x) =
  h(0) - \int_0^x f(y) \dy =
  h(L) + \int_x^L f(y) \dy.
\end{equation*}
Next, we obtain (possibly non-unique) optimal auxiliary fields $(\ed^*, \sd^*)$
by minimizing $\abs{s^*(x) - \sd(x)}$ \emph{pointwise} over $x \in \Omega$.
Finally, we set $e^* \dfn \ed^*$ and obtain $u^*$ in $Q$ as
\begin{equation*}
  u^*(x) \dfn \~u(x) - \frac1L \int_0^L \~u(y) \dy
  \qtextq{where}
  \~u(x) \dfn \int_0^x e^*(y) \dy.
\end{equation*}
The construction for \BPM2 with mixed boundary data is similar;
cf.\ \cite[Thm.~7]{Gebhardt_Steinbach:2025}:
\begin{equation*}
  \inf \ \obj(e - \ed, s - \sd) \qstq
  \pd_x u = e, \quad
  \pd_x s = -f, \quad
  u(0) = g(0), \quad s(L) = h(L).
\end{equation*}
Again, we have the unique optimal stress field
$s^*(x) = h(L) + \int_x^L f(y) \dy$
from which we obtain $(\ed^*, \sd^*)$ in $\setD$, then $e^* \dfn \ed^*$,
and finally the unique displacement field
\begin{equation*}
  u^*(x) = g(0) + \int_0^x e^*(y) \dy.
\end{equation*}
Note that closedness of the data set $\data$ (possibly given by $\cg = 0$)
suffices for existence of these data-driven solutions.
Note also that they solve \BPM0 if and only if $\sd^* = s^*$.

\section{Summary}
\label{sec:summary}

We have presented a complete structural analysis
of the classical static elasticity problem at small deformations
and of its data-driven counterpart in Hilbert space.
The key spaces, operators and fields with their interrelations
and with all four orthogonal decompositions
are illustrated in \cref{fig:cd}.

\begin{figure}
  \centering
  \let\emblue\emred\input{cd-bpm.tex}
  \caption{Commutative diagram of key spaces, operators and fields.
    Objects in red depend on the boundary partitioning;
    a red $\oplus$ connects spaces that are orthogonal
    in $L^2$ and in $H\tsp{sym}$ or $H\tsp{div}$.}
  \label{fig:cd}
\end{figure}

Our theory generalizes earlier work for one spatial dimension
in discretized form \cite{Gebhardt_Lange_Steinbach:2025}
and in Hilbert space \cite{Gebhardt_Steinbach:2025},
where only the pure Dirichlet problem is nontrivial
and adjoint operators appear naturally.
This adjoint duality extends to arbitrary dimension
(we found this only in \Cite{Kurula_Zwart:2012}
and afterwards in \Cite{Gudoshnikow_Krizk:2025})
where it is embedded in a much richer, primarily geometric structure
that boils down to orthogonal decompositions of Hilbert spaces
induced by the duality, by infinitesimal rigid body motion,
and by the extension and lift maps for boundary data.

Although the work in \Cite{Gudoshnikow_Krizk:2025}
is similar in spirit to ours with a focus on geometric structure
and with unbounded adjoint operators as a key concept
(inspired by \Cite{Kurula_Zwart:2012}),
it differs in several respects.
Firstly, it is more special in that
a specific material law is chosen (inhomogeneous linear elasticity),
pure Neumann problems are not addessed,
and Neumann boundary data are taken in $L^2(\Gamma, \Rd)$, not $\Hnhalff$.
More importantly, several other aspects (often seemingly innocuous)
direct the focus to secondary technical issues
so that symmetries and structural properties are overlooked:
stress $s$ is taken as the ``main unknown'',
the equilibrium is stated in variational form,
with Dirichlet and Neumann boundary data appearing asymmetrically
and being treated asymmetrically later on,
and the key unbounded adjoint operators involve Neumann boundary data
via the dual Lions--Magenes space.
In contrast, we keep symmetries and uncover structural properties
by relying on the dual trace spaces with their duality pairing
(inspired by \Cites{Conti2018,Conti2020}),
by employing pseudoinverses where possible,
by treating all physical states equally (in particular strains and stresses),
and by keeping PDEs and boundary conditions cleanly separated
(via extension and lift maps).

From a mechanical viewpoint,
the deepest structure revealed by the orthogonal decompositions
is the splitting between kinematics and equilibrium:
$\LtwoS = M \oplus N$.
The first component is generated by displacement fields,
whereas the second consists of self-equilibrated stresses.
Once this decomposition is established,
the remaining ingredients of the formulation
are determined by the boundary conditions and source terms.
The secondary structure identified provides the trace operators
together with their associated extension and lifting maps.
This structure allows the boundary conditions to be absorbed into shifted fields
and transforms the original problem into an optimization problem
over linear spaces with homogeneous boundary conditions and,
where necessary, with rigid body motions factored out.
In this sense, the presented framework identifies
the intrinsic degrees of freedom of the elasticity problem
as displacement fields and self-equilibrated stress fields,
while boundary conditions and external loads
act only through appropriate shifts of these fundamental spaces.
This also applies to data-driven continuum mechanics.

We believe that the presented theory provides a firm basis
for analyzing existence and uniqueness of solutions
to data-driven elasticity problems (our original motivation),
and we hope that our results will be useful in general
for mathematical elasticity and for the related areas
mentioned in the introduction.

\bibliographystyle{siam}
\bibliography{ddcm}

\typeout{get arXiv to do 4 passes: Label(s) may have changed. Rerun}
\end{document}

%% file: cd-bpm.tex
\begin{center}
  \newcommand\eql[1][=]{\ar[d, phantom, "\rotatebox{90}{$#1$}"]}
  \newcommand\opl[1][]{\ar[d, phantom, "#1\oplus"]}
  \newcommand\red{\textcolor{LUH-red}}
  \begin{tikzcd}[row sep=5pt]
    &
    u \in \Hsym \eql \ar[ldd, bend right=24, "T_0"]
    \ar[r, "\Grad"'] &
    \LtwoS \ni e \eql \\
    &
    \red G \cap Q \opl \ar[r, "\;\red{B = \Grad|_G}"'] &
    \red M \opl \\
    \; g \eql[\ni] &
    \red G \cap R \opl[\red] \ar[r, "\red B\,"] &
    \set{0} \subset \red N
    \rlap{${} \subset \red D$} 
    \hphantom{{} \subset \set{0}}
    \eql[\subset] \\
    \Hphalf \ar[r, "\red{\extD_s}"] &
    \red{G_s^\perp} \rlap{${}\ni \extD_s g$} \eql &
    \red{D^\perp} \opl \\
    &
    \llap{$\Hsym[0] \subseteq {}$} \red G \rlap{${}\ni u_0$} \opl &
    \llap{$s_0 \in{}$} \red D \rlap{${} \supseteq \Hdivz$} \eql \\
    &
    \red{G_s^\perp} \eql[\supset] &
    \llap{$\extN h \in{}$} \red{D^\perp} \opl[\red] &
    \Hnhalf \ar[l, "\red\extN"] \eql[\in] \\
    &
    \hphantom{\set{0} \supset {}} R \supset \set{0} \opl &
    \red N \opl \ar[l, "\red{B^*}"] & h \; \\
    &
    Q \eql &
    \red{D \cap M} \eql \ar[l, "\,\red{-\div|_D = B^*}"'] \\
    &
    f \in \LtwoR &
    \Hdiv \ni s \ar[l, "-\div"'] \ar[ruu, bend right=24, "T_s"]
  \end{tikzcd}
\end{center}